\DeclareMathOperator*{\argmin}{arg\,min}
\newtcolorbox{Promptbox}[1][]{%
  title=Prompt eliciting complete proof from GPT-5 Pro,
  #1
}
\newtcolorbox{AuthorsBox}[1][]{%
  title=Author's prompt,
  #1
}
\newtcolorbox{ChatGPTBox}[1][]{%
  colframe=violet!35!white,
  title=Response of GPT-5-pro,
  #1
}
\crefname{hypothesis}{Hypothesis}{Hypotheses}
\crefname{fact}{Fact}{Facts}
\title{Point Convergence of Nesterov's Accelerated Gradient Method:
An AI-Assisted Proof\thanks{Submitted to the editors DATE.
}}
\author{Uijeong Jang\thanks{Department of Mathematics, University of California, Los Angeles
  (\email{uijeongjang@math.ucla.edu, eryu@math.ucla.edu}).}
\and Ernest K. Ryu\footnotemark[2]}
\begin{document}

\maketitle

\begin{abstract}
The Nesterov accelerated gradient method, introduced in 1983, has been a cornerstone of optimization theory and practice. Yet the question of its point convergence had remained open. In this work, we resolve this longstanding open problem in the affirmative. The discovery of the proof was heavily assisted by ChatGPT, a proprietary large language model, and we describe the process through which its assistance was elicited.
\end{abstract}

\begin{keywords}
Nesterov's accelerated gradient method, Convex optimization, Point convergence, AI-assisted proof.
\end{keywords}

\begin{MSCcodes}
37C10, 65K05, 68T15, 90C25
\end{MSCcodes}

\section{Introduction}
Consider the optimization problem
\[
\begin{array}{ll}
\underset{x\in \mathbb{R}^n}{\mbox{minimize}}&f(x),
\end{array}
\]
where $f\colon \mathbb{R}^n\rightarrow \mathbb{R}$ is $L$-smooth and convex. The plain gradient descent method, whose origin dates back to Cauchy \cite{Cauchy1847_methode}, is
\[
x_{k+1}=x_k-\frac{1}{L}\nabla f(x_k)\qquad\text{for }k=0,1,\dots
\]
with $x_0\in \mathbb{R}^n$. Gradient descent is known to converge in the sense of $f(x_k)-\inf f\le \mathcal{O}(1/k)$ and 
 \[
 x_k\rightarrow x_\infty \in \argmin f
 \]
\cite{opial1967weak,baillon1977quelques,burachik1995full}.
However, the $\mathcal{O}(1/k)$ rate is famously suboptimal and can be accelerated.
Nesterov's 1983 seminal paper \cite{nesterov1983method} presented the Nesterov accelerated gradient (NAG) method
\begin{align*}
\begin{aligned}    
x_{k+1} &= y_k - \frac{1}{L} \nabla f(y_k), \\
y_{k+1} &= x_{k+1} + \frac{t_k-1}{t_{k+1}} (x_{k+1} - x_k)
\end{aligned}\qquad\text{for }k=0,1,\dots
\end{align*}
with $x_0=y_0\in \mathbb{R}^n$, $t_0=1$, and $t_{k+1}^2-t_{k+1}\le t_k^2$. NAG is known to converge in the sense of $f(x_k)-\inf f\le \mathcal{O}(1/k^2)$, an accelerated rate, when $t_{k+1} = (1 + \sqrt{1 + 4t_k^2})/2$ or $t_k=(k+2)/2$ for $k=0,1,\dots$. However, whether NAG exhibits point convergence 
\[
x_k\stackrel{\text{?}}{\rightarrow} x_\infty \in \argmin f
\]
has remained an open problem \cite{combettes2011proximal, attouch2019rate,chambolle2015convergence,bauschke2020applying}.

In this work, we resolve this longstanding open problem in the affirmative by showing point convergence for NAG in the sense of
\[
x_k\rightarrow x_\infty,\qquad
y_k\rightarrow x_\infty,\qquad
x_\infty\in \argmin f.
\]
The discovery of the proof was heavily assisted by ChatGPT, a proprietary large language model, and we describe the process through which its assistance was elicited.

\subsection{Continuous-time formulation}
Continuous-time models provide a powerful lens for understanding accelerated optimization algorithms by interpreting discrete-time methods as discretizations of underlying dynamical systems. This perspective dates back to Polyak's heavy-ball method \cite{polyak1964some,polyak1987introduction}, which derives the momentum mechanism from a second-order mechanical system evolving in a potential $f$ under constant friction. Of particular interest in this work is the continuous-time model for Nesterov's accelerated gradient (NAG) method, first introduced by Su, Boyd, and Cand\`es \cite{su2014differential,su2016differential}:
\begin{equation}
\ddot{X}(t) + \frac{r}{t}\dot{X}(t) + \nabla f(X(t)) = 0, \qquad \text{for }t \ge 0,
\label{eq:ode-nesterov}
\end{equation}
with initial conditions $X(0) = X_0$ and $\dot{X}(0) = 0$ and the so-called damping parameter $r=3$. We refer to \eqref{eq:ode-nesterov} as the \emph{Nesterov ODE}.

In the optimization literature, such continuous-time models have served as simplified proxies for their discrete-time counterparts, and prior work has fruitfully adopted the approach of first establishing properties of the continuous-time dynamics and then translating the insights to the discrete-time setting. We follow precisely this strategy: we first show that the continuous-time trajectory $X(t)$ converges to a minimizer of $f$, namely,
\[
X(t) \to x_\infty \in \argmin f,
\]
and then adapt the proof technique to the discrete-time NAG algorithm.

\subsection{Prior works}
\label{ss:priorworks}
There is a large body of work analyzing point convergence of Nesterov-type accelerated first-order optimization methods \cite{attouch2000heavy,chambolle2015convergence,attouch2015fast,attouch2016fast,may2017asymptotic,attouch2018convergence,attouch2018fast,attouch2019rate,attouch2025recovering}. Many of these analyses rely, either directly or indirectly, on insights obtained from continuous-time dynamics. A prominent viewpoint in these works is that the damping parameter $r>0$ (as in \eqref{eq:ode-nesterov}, or an analogous formulation) governs the qualitative behavior of the associated ODE. In particular, when $r>3$, point convergence of the Nesterov ODE and the analogous discrete-time algorithms is established \cite{chambolle2015convergence,attouch2016rate,attouch2018fast,may2017asymptotic} via arguments that rely on the estimates
\[
\int_{0}^\infty t\bigl(f(X(t)) - f_\star\bigr)\,dt < \infty,
\qquad
\int_{0}^\infty t\|\dot{X}(t)\|^2\,dt < \infty.
\]

In the case $r=3$, point convergence would similarly follow if these estimates could be established. However, the existing techniques used to derive such estimates fail when $r \le 3$. Our proof of Theorem~\ref{ode-thm} therefore follows a different route and avoids estimating these integrals altogether. In fact, we conjecture that it may be possible for these quantities to be not integrable (i.e., the integrals diverge) even when the trajectory $\{X(t)\}_{t \ge 0}$ converges. If so, the prior strategy of establishing the point convergence through estimating the integrals would be a dead end in the $r=3$ case, and this may partially explain why this problem has remained open for so long.

\subsection{Concurrent work}
This result, along with the use of AI in its discovery, was first announced
by the authors on \texttt{X.com}, a social networking platform, through a series of daily posts between October~21 and October~24, 2025, and attracted significant public attention. In particular, convergence of the continuous-time dynamics was announced on October~21, while the convergence of the discrete-time NAG method was announced on October~24.

On October~24, the authors of~\cite{bot2025iterates} contacted the authors of this work with a manuscript also establishing point convergence of the discrete-time NAG. Their work builds upon the initial \texttt{X.com} post made on October~21 but was conducted independently of the October~24 post.

Beyond the analysis of NAG, the manuscript~\cite{bot2025iterates} contains additional results that do not overlap with the results of this work: their analysis extends to the infinite-dimensional Hilbert space setting, and they further argue that the 
Fast Iterative Shrinkage-Thresholding Algorithm (FISTA) method~\cite{beck2009fast} also exhibits point convergence.

\subsection{Notation}
Write $\mathbb{R}_{++}$ to denote the set of strictly positive real numbers.
Write $\langle\cdot,\cdot\rangle$ to denote the Euclidean inner product and $\|\cdot\|$ the Euclidean norm in $\mathbb{R}^n$.
A function $f\colon \mathbb{R}^n\rightarrow\mathbb{R}$ is convex if
\[
f(\theta x+(1-\theta)y)\le 
\theta f(x)+(1-\theta)f(y),\qquad\forall\,x,y\in \mathbb{R}^n,\,\theta\in[0,1].
\]
If $f\colon\mathbb{R}^n\rightarrow\mathbb{R}$ is differentiable and convex, then it satisfies the convexity inequality \cite[Equation 2.1.2]{nesterov2018lectures}
\[
f(y)\ge f(x)+\langle \nabla f(x),y-x\rangle
,\qquad\forall\, x,y\in \mathbb{R}^n.
\]
For $L>0$, we say $f\colon\mathbb{R}^n\rightarrow\mathbb{R}$ is $L$-smooth if $f$ is differentiable and
\[
\|\nabla f(x)-\nabla f(y)\| \le L\|x-y\|\qquad
\forall\,x,y\in \mathbb{R}^n.
\]
If $f\colon\mathbb{R}^n\rightarrow\mathbb{R}$ is $L$-smooth and convex, then it satisfies the cocoercivity inequality \cite[Theorem 2.1.5]{nesterov2018lectures}
\[
f(y)\ge f(x)+\langle \nabla f(x),y-x\rangle
+\frac{1}{2L}\|\nabla f(y)-\nabla f(x)\|^2
,\qquad\forall\, x,y\in \mathbb{R}^n.
\]
Write $\argmin f$ to denote the set of minimizers of $f$ and $f_\star=\inf_{x\in \mathbb{R}^n}f$ to denote the minimum value.

\section{Point convergence in continuous time}

Let $f\colon \mathbb{R}^n\rightarrow\mathbb{R}$ be convex and differentiable. Assume $\argmin f\ne\emptyset$.
For $z\in\argmin f$, consider the following energy function
    \[
    \mathcal{E}_z(t) = t^2(f(X)-f_\star)+\frac{1}{2}\big\| t\dot{X}+2(X-z)\big\|^2.
    \]

\begin{lemma}\label{lem:1}
The energy function $\mathcal{E}_z(t)$ converges to a finite limit and $X(t)$ is bounded for all $t$.
\end{lemma}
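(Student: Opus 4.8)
The plan is to compute the time derivative of $\mathcal{E}_z(t)$ and show it is nonpositive, so that $\mathcal{E}_z$ is nonincreasing, hence convergent to a finite limit since it is bounded below by $0$. Differentiating, I would get
\[
\dot{\mathcal{E}}_z(t) = 2t(f(X)-f_\star) + t^2\langle \nabla f(X),\dot X\rangle + \langle t\dot X + 2(X-z),\, \dot X + t\ddot X + 2\dot X\rangle.
\]
The key move is to substitute $t\ddot X = -r\dot X - t\nabla f(X) = -3\dot X - t\nabla f(X)$ using the ODE with $r=3$, so that $\dot X + t\ddot X + 2\dot X = 3\dot X + t\ddot X - t\nabla f(X)\cdot 0$... more carefully, $\dot X + t\ddot X + 2\dot X = 3\dot X + (-3\dot X - t\nabla f(X)) = -t\nabla f(X)$. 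Then the inner product becomes $\langle t\dot X + 2(X-z), -t\nabla f(X)\rangle = -t^2\langle \dot X,\nabla f(X)\rangle - 2t\langle X-z,\nabla f(X)\rangle$. The $t^2\langle \nabla f(X),\dot X\rangle$ terms cancel, leaving $\dot{\mathcal{E}}_z(t) = 2t(f(X)-f_\star) - 2t\langle X-z,\nabla f(X)\rangle$. By the convexity inequality applied at $X$ with $y = z$, we have $f_\star = f(z) \ge f(X) + \langle \nabla f(X), z - X\rangle$, i.e. $f(X)-f_\star \le \langle \nabla f(X), X-z\rangle$, so $\dot{\mathcal{E}}_z(t) \le 0$.

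From monotonicity and the lower bound $\mathcal{E}_z \ge 0$, the limit $\lim_{t\to\infty}\mathcal{E}_z(t)$ exists and is finite; in particular $\mathcal{E}_z(t)$ is bounded above by $\mathcal{E}_z(t_0)$ for any fixed reference time $t_0 > 0$ (one should pick a positive starting time since the damping term $r/t$ is singular at $t=0$, but boundedness near $t=0$ follows from continuity of the solution). To extract boundedness of $X(t)$, I would note that $\frac{1}{2}\|t\dot X + 2(X-z)\|^2 \le \mathcal{E}_z(t)$ is bounded, and combine this with the bound on the other energy term. The cleanest route is to observe that $t\dot X + 2(X-z) = \frac{d}{dt}\big(t(X-z)\big) + (X - z)$; actually $\frac{d}{dt}(t(X-z)) = (X-z) + t\dot X$, so $t\dot X + 2(X-z) = \frac{d}{dt}(t(X-z)) + (X-z)$, which is a bit awkward. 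A better substitution: with $w(t) = t^2(X(t)-z)$ one has $\dot w = 2t(X-z) + t^2\dot X = t(t\dot X + 2(X-z))$, so $\|\dot w(t)\| = t\|t\dot X + 2(X-z)\| \le t\sqrt{2\mathcal{E}_z(t)} \le t\sqrt{2\mathcal{E}_z(t_0)}$.

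Then $\|w(t) - w(t_0)\| \le \int_{t_0}^t \|\dot w(s)\|\,ds \le \sqrt{2\mathcal{E}_z(t_0)}\int_{t_0}^t s\,ds \le \sqrt{2\mathcal{E}_z(t_0)}\,\frac{t^2}{2}$, which gives $\|t^2(X(t)-z)\| \le \|w(t_0)\| + \sqrt{\mathcal{E}_z(t_0)/2}\,t^2$, hence $\|X(t)-z\| \le \|w(t_0)\|/t^2 + \sqrt{\mathcal{E}_z(t_0)/2}$, so $X(t)$ is bounded on $[t_0,\infty)$; boundedness on $[0,t_0]$ is immediate from continuity. The main obstacle I anticipate is not the derivative computation — that is essentially the classical Su–Boyd–Candès Lyapunov analysis — but rather cleanly converting the energy bound into boundedness of $X(t)$ itself, since the energy only directly controls the composite quantity $t\dot X + 2(X-z)$ rather than $X - z$ alone; the $w = t^2(X-z)$ reparametrization (or an equivalent Grönwall-type argument) is the device that closes this gap, and care is needed with the integrability and the behavior as $t\to 0^+$.
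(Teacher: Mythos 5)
Your proposal is correct and follows essentially the same route as the paper: you compute $\dot{\mathcal{E}}_z(t)=2t\bigl(f(X)-f_\star-\langle\nabla f(X),X-z\rangle\bigr)\le 0$ (the paper cites this identity from Su--Boyd--Cand\`es rather than re-deriving it), then use the substitution $w(t)=t^2(X(t)-z)$ to turn the energy bound on $\|t\dot X+2(X-z)\|$ into a bound on $\|X-z\|$ by integrating $\dot w$. The only cosmetic difference is that the paper integrates from $t=0$ (exploiting $W(0)=0$) to get the clean bound $\|X-z\|\le M/2$, whereas you start from a positive reference time $t_0$ and invoke continuity on $[0,t_0]$; both are fine.
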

\begin{proof}
Clearly, $\mathcal{E}_z(t)\ge 0$.
From \cite{su2014differential}, it is known that 
\[
\frac{d}{dt}\mathcal{E}_z(t) =2t
\underbrace{\left(f(X)-f_\star-\langle \nabla f(X),\,X-z\rangle\right)}_{\le 0, \textup{ by convexity } } \le 0 
\]
for $t\ge 0$. Since $\mathcal E_z(t)$ is nonincreasing and bounded below by zero, it converges to a finite limit. Boundedness of the trajectory $\{X(t)\}_{t \ge 0}$ is known from \cite[Theorem~2.4]{attouch2019rate}.
\end{proof}

\begin{theorem}
\label{ode-thm}
    The solution $X(t)$ of the Nesterov ODE converges to a minimizer. That is,
    \[
    X(t)\rightarrow X_\infty\in \argmin f.
    \]
\end{theorem}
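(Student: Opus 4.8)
The plan is to invoke Opial's lemma: if (i) $\lim_{t\to\infty}\|X(t)-z\|$ exists for every $z\in\argmin f$, and (ii) every subsequential limit of the trajectory lies in $\argmin f$, then $X(t)$ converges to a point of $\argmin f$. Condition (ii) comes essentially for free from Lemma~\ref{lem:1}: since $0\le t^2(f(X(t))-f_\star)\le\mathcal E_z(t)\le\mathcal E_z(0)$ we get $f(X(t))\to f_\star$, and because $\{X(t)\}$ is bounded and $f$ is continuous, any limit of a sequence $X(t_k)$ satisfies $f=f_\star$. So the whole difficulty is condition (i).

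Fix $z\in\argmin f$, a time $t_0>0$, and set $p(t)=\|X(t)-z\|^2$. Expanding the square in $\mathcal E_z$ and using $\dot p=2\langle\dot X,X-z\rangle$ gives the exact identity
\[
\mathcal E_z(t)=\underbrace{\Big(t^2\big(f(X(t))-f_\star\big)+\tfrac{1}{2} t^2\|\dot X(t)\|^2\Big)}_{=:\,K(t)}\;+\;\underbrace{\big(t\dot p(t)+2p(t)\big)}_{=:\,b(t)}.
\]
Since $\mathcal E_z$ converges (Lemma~\ref{lem:1}), $b$ converges if and only if $K$ does, and if $b(t)\to b_\infty$ then from $\frac{d}{dt}(t^2p)=t\,b$ we recover $p(t)=t^{-2}\big(t_0^2p(t_0)+\int_{t_0}^{t}s\,b(s)\,ds\big)\to b_\infty/2$ by l'H\^opital. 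Moreover both terms of $K$ are bounded — the first by $\mathcal E_z(0)$, the second because $\|t\dot X+2(X-z)\|\le\sqrt{2\mathcal E_z(0)}$ while $X$ is bounded — so $K\ge 0$ is bounded. Hence condition (i) is implied by the single statement: \emph{$K(t)$ converges as $t\to\infty$}.

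The hard part, and the step I expect to be the genuine obstacle, is exactly this, and it is where the critical damping $r=3$ bites. A computation gives $\dot K(t)=2t\big(f(X(t))-f_\star\big)-2t\|\dot X(t)\|^2$, so one would like $\int_{t_0}^\infty t(f(X)-f_\star)\,dt<\infty$ and $\int_{t_0}^\infty t\|\dot X\|^2\,dt<\infty$ — precisely the bounds that hold for $r>3$ with constants proportional to $r-3$ and that degenerate at $r=3$. My plan to obtain convergence of $K$ regardless is to extract an additional strictly dissipated quantity: either augment $\mathcal E_z$ with a lower-order correction (a cross term $\langle\dot X,X-z\rangle$, or a slowly varying weight such as $1/\log t$) whose time-derivative dominates $t(f(X)-f_\star)$; or exploit that $\mathcal E_z$ is nonincreasing for \emph{all} $z\in\argmin f$ simultaneously — which already yields $\int_{t_0}^\infty t\big(\langle\nabla f(X),X-z\rangle-(f(X)-f_\star)\big)\,dt<\infty$ from the convexity inequality — and then run an \emph{almost-decreasing implies convergent} argument (if $\dot K(t)\le\varepsilon(t)$ with $\int_{t_0}^\infty\varepsilon^+<\infty$ and $K$ is bounded below, then $K$ converges). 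Showing the stronger $K(t)\to 0$ would additionally recover the $o(1/t^2)$ rate, but the weaker ``$K$ converges'' is all that condition (i) requires.

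With (i) and (ii) in hand, Opial's lemma closes the argument: if $x_1,x_2$ are two cluster points then $\|X(t)-x_1\|$ and $\|X(t)-x_2\|$ both converge, so $\langle X(t),x_1-x_2\rangle=\tfrac{1}{2}\big(\|X(t)-x_2\|^2-\|X(t)-x_1\|^2+\|x_1\|^2-\|x_2\|^2\big)$ converges; taking the limit along subsequences $X(t_k)\to x_1$ and $X(s_k)\to x_2$ gives $\langle x_1,x_1-x_2\rangle=\langle x_2,x_1-x_2\rangle$, i.e.\ $x_1=x_2$. Thus the bounded trajectory has exactly one cluster point and therefore $X(t)\to X_\infty\in\argmin f$.
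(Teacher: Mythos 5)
There is a genuine gap, and it is the step you yourself flag as the ``genuine obstacle'': you reduce condition (i) of Opial's lemma to the claim that $K(t)=t^2(f(X)-f_\star)+\tfrac12 t^2\|\dot X\|^2$ converges, and you never prove this. Your plan~B (integrability of the positive part of $\dot K(t)=2t(f(X)-f_\star)-2t\|\dot X\|^2$) does not obviously close: the dissipation that Lemma~\ref{lem:1} actually provides is $\int_{t_0}^\infty t\big(\langle\nabla f(X),X-z\rangle-(f(X)-f_\star)\big)\,dt<\infty$, and since by convexity $\langle\nabla f(X),X-z\rangle\ge f(X)-f_\star$ this controls only the \emph{difference}, not $\int t(f(X)-f_\star)\,dt$ itself. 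Indeed $\int t(f(X)-f_\star)\,dt<\infty$ (equivalently $o(1/t^2)$ for function values) is precisely the estimate that holds for $r>3$ and degenerates at $r=3$, which is why you correctly sense difficulty here.

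The insight you are missing is already latent in your own writeup: you observe that $K(t)$ does not depend on $z$, and at the very end you observe that you only ever need the \emph{difference} $\|X(t)-x_2\|^2-\|X(t)-x_1\|^2$ to converge. Put these two together \emph{before} trying to prove $K$ converges. Writing $b_z(t)=t\,\tfrac{d}{dt}\|X-z\|^2+2\|X-z\|^2$, your identity is $\mathcal E_z(t)=K(t)+b_z(t)$, so
\[
b_{z_1}(t)-b_{z_2}(t)=\mathcal E_{z_1}(t)-\mathcal E_{z_2}(t),
\]
with the problematic $K(t)$ canceling exactly. Since both $\mathcal E_{z_1}$ and $\mathcal E_{z_2}$ converge by Lemma~\ref{lem:1}, the right-hand side converges, and the same first-order linear ODE for $h_1-h_2:=\|X-z_1\|^2-\|X-z_2\|^2$ that you already wrote down (with integrating factor $t$) gives convergence of $h_1-h_2$ by L'H\^opital. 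That is what the paper does, and it settles condition~(i) in the only form you actually use. With that replacement your argument is complete; as written it is not, because the convergence of $K$ is a strictly harder (and possibly false, at least not established here) statement that your proof does not require.
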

\begin{proof}
Since $\{X(t)\}_{t\ge 0}\subset\mathbb{R}^n$ is bounded, the trajectory has at least one cluster point. Since $f(X(t))\rightarrow f_\star$ and $f$ is continuous, all cluster points attain the function value $f_\star$ and therefore are minimizers.
If there is only one cluster point, $X(t)$ converges to a limit, and we are done.

Therefore, assume $z_1$ and $z_2$ are cluster points of $X(t)$, so $z_1,z_2\in \argmin f$. Let
\[
h_{i}(t) := \|X(t)- z_i\|^2 \qquad\text  { for } i=1,2 
\]
and
\[
H(t) := \mathcal{E}_{z_1}(t) - \mathcal{E}_{z_2}(t).
\]
The subtraction $\mathcal{E}_{z_1}- \mathcal{E}_{z_2}$ cancels
the $t^2(f(X)-f_\star)$ and $\tfrac{1}{2}t^2\|\dot{X}\|^2$ terms
and yields the linear ODE:
\begin{equation}\label{eq:linear}
    t\,(h_1 - h_2)' + 2(h_1 - h_2) = H(t).
\end{equation}
Since both $\mathcal{E}_{z_1}(t)$ and $\mathcal{E}_{z_2}(t)$ converge to finite limits, so does $H(t)$, and we write
\[
H(t) \to H(\infty)<\infty.
\]
Multiplying \eqref{eq:linear} by the integrating factor $t$, we obtain
\[
\frac{d}{dt}
(t^2(h_1(t) - h_2(t))) = t\,H(t).
\]
Then
\[
(t^2(h_1(t) - h_2(t)))-(t_0^2(h_1(t_0) - h_2(t_0))) = \int_{t_0}^{t}sH(s)ds
\]
for any $t\ge t_0\ge 0$,
which implies 
\[
h_1(t) - h_2(t) = \frac{t_0^2}{t^2}(h_1(t_0) - h_2(t_0)) + \frac{1}{t^2}\int_{t_0}^{t}sH(s)ds.
\]
For any $\varepsilon>0$, there exists $T\ge t_0$ that depends on $\varepsilon$ such that $|H(t)-H(\infty)|\le \varepsilon$ for $t\ge T\ge t_0$. Then, 
\[
h_1(t) - h_2(t) = \frac{t_0^2}{t^2}(h_1(t_0) - h_2(t_0)) + \frac{1}{t^2}\int_{t_0}^{T}sH(s)ds+ \frac{1}{t^2}\int_{T}^{t}sH(s)ds.
\]
By taking $t\to \infty$, the first term on the right-hand side vanishes. For the second term,
\[
\bigg| \frac{1}{t^2}\int_{t_0}^{T}sH(s)ds\bigg|\le  \frac{1}{t^2}\int_{t_0}^{T}s|H(s)|ds\le\max_{t\in[t_0,T]}|H(t)|\cdot\frac{T^2-t_0^2}{2t^2}\to 0.
\]
Lastly, the last term converges to $\frac{H(\infty)}{2}$ since 
\begin{align*}
\left| \frac{1}{t^2}\int_{T}^{t}sH(s)\,ds-\frac{H(\infty)}{2}\right|
&=\left|\frac{1}{t^2}\int_T^t s\bigl(H(s)-H(\infty)\bigr)\,ds
     +\frac{1}{t^2}\int_T^t sH(\infty)\,ds-\frac{H(\infty)}{2}\right| \\
&\le \left|\frac{1}{t^2}\int_T^t s\bigl(H(s)-H(\infty)\bigr)\,ds\right|
    +\left|\frac{1}{t^2}\int_T^t sH(\infty)\,ds-\frac{H(\infty)}{2}\right| \\
&\le \frac{1}{t^2}\int_T^t s\bigl|H(s)-H(\infty)\bigr|\,ds
    +\left|\frac{H(\infty)}{t^2}\cdot\frac{t^2-T^2}{2}-\frac{H(\infty)}{2}\right| \\
&\le \frac{\varepsilon}{t^2}\int_T^t s\,ds
    +\frac{|H(\infty)|T^2}{2t^2} \\
&= \frac{\varepsilon}{2}\Bigl(1-\frac{T^2}{t^2}\Bigr)
  +\frac{|H(\infty)|T^2}{2t^2}\to \frac{\varepsilon}{2}
\end{align*}
as $t\to\infty$ and $\varepsilon>0$ was chosen arbitrarily. Thus, $h_1(t)-h_2(t)$ converges. Evaluating $h_1-h_2$ along two subsequences $\{t_k\}_{k\ge 0}$ and $\{s_k\}_{k\ge 0}$ such that
$X(t_k)\rightarrow z_1$ with $t_k\to \infty$ and $X(s_k)\rightarrow z_2$ with $s_k\to\infty$ respectively, we get 
\[
\lim_{k\to \infty}(h_1(t_k)-h_2(t_k)) = \|z_1-z_1\|^2-\|z_1-z_2\|^2=-\|z_1-z_2\|^2
\]
and 
\[
\lim_{k\to \infty}(h_1(s_k)-h_2(s_k)) = \|z_2-z_1\|^2-\|z_2-z_2\|^2= \|z_2-z_1\|^2
\]
The two values must coincide since $h_1-h_2$ has a limit, therefore $z_1=z_2$.
\end{proof}

\section{Point convergence in discrete time}
In this section, we establish point convergence of the Nesterov accelerated gradient (NAG) method \cite{nesterov1983method} and the optimized gradient method (OGM) by Kim and Fessler \cite{kim2016optimized}.

\subsection{Point convergence of Nesterov accelerated gradient}
Consider the Nesterov accelerated gradient algorithm (NAG)
\begin{align*}
\begin{aligned}
x_{k+1} &= y_k - \frac{1}{L} \nabla f(y_k), \\
y_{k+1} &= x_{k+1} + \frac{t_k-1}{t_{k+1}} (x_{k+1} - x_k)
\end{aligned}
\qquad\text{for }k=0,1,\dots
\end{align*}
with $x_0=y_0\in \mathbb{R}^n$ and a nonnegative sequence $\{t_k\}_{k=0,1,\dots}$ satisfying $t_0=1$, $t_{k+1}^2-t_{k+1}\le t_k^2$ for $k=0,1,\dots$, and $t_k\rightarrow\infty$ as $k\rightarrow\infty$. Typical choices for the sequence $\{t_k\}_{k=0,1,\dots}$ are $t_{k+1} = (1 + \sqrt{1 + 4t_k^2})/2$ or $t_k=(k+2)/2$. For notational convenience, also define $t_{-1}=0$ (to make $E_0$ fit the same formula).

\begin{lemma}
[{\cite[Lemma~1]{nesterov2005smooth}}]
\label{lem:equiv}
The following method is equivalent to NAG in the sense that it produces the same sequences $\{x_k\}_{k \ge 0}$ and $\{y_k\}_{k \ge 0}$:
\begin{align*}
\begin{aligned}    
x_{k+1}&=y_k-\frac{1}{L} \nabla f(y_k)\\
z_{k+1}&=z_k-\frac{t_k}{L}\nabla f(y_k)\\
y_{k+1}&=\left(1-\frac{1}{t_{k+1}}\right)x_{k+1}+\frac{1}{t_{k+1}}z_{k+1}
\end{aligned}
\qquad\text{for }k=0,1,\dots,
\end{align*}
where $x_0=y_0=z_0\in \mathbb{R}^n$. 
\end{lemma}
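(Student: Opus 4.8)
The plan is to prove the claimed equivalence by a direct induction on $k$, establishing simultaneously that the two recursions produce identical iterates $\{x_k\}_{k\ge 0}$ and $\{y_k\}_{k\ge 0}$, while controlling the auxiliary variable $z_k$ through an explicit invariant expressing it in terms of $x_k$ and $y_k$.

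First I would fix notation: let $\{x_k,y_k\}$ denote the iterates of the original NAG and $\{x_k',y_k',z_k'\}$ those of the three-line formulation, and aim to show $x_k'=x_k$ and $y_k'=y_k$ for all $k$. The crux is to carry along the invariant
\[
z_k' = x_k + t_k\,(y_k - x_k)\qquad\text{for all }k\ge 0,
\]
which holds at $k=0$ because $x_0=y_0=z_0$, so that the value of $t_0$ is immaterial there since $y_0-x_0=0$.

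For the inductive step, assume $x_k'=x_k$, $y_k'=y_k$, and the invariant at index $k$. The $x$-update is the identical line in both formulations, so $x_{k+1}'=x_{k+1}$ at once. Substituting $-\tfrac1L\nabla f(y_k)=x_{k+1}-y_k$ into the $z$-update and using the invariant gives $z_{k+1}'=x_k+t_k(x_{k+1}-x_k)$. Feeding this into $y_{k+1}'=(1-\tfrac1{t_{k+1}})x_{k+1}+\tfrac1{t_{k+1}}z_{k+1}'$ and collecting terms collapses the right-hand side to $x_{k+1}+\tfrac{t_k-1}{t_{k+1}}(x_{k+1}-x_k)$, which is exactly the original NAG $y$-update, so $y_{k+1}'=y_{k+1}$. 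Finally, rearranging that same $y$-update line to solve for $z_{k+1}'$ yields $z_{k+1}'=x_{k+1}+t_{k+1}(y_{k+1}-x_{k+1})$, re-establishing the invariant at index $k+1$ and closing the induction.

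I do not anticipate a genuine obstacle: the argument is a short algebraic verification whose only non-mechanical ingredient is guessing the correct invariant $z_k'=x_k+t_k(y_k-x_k)$. Once that is in place, the whole computation reduces to the substitution $\nabla f(y_k)=L(y_k-x_{k+1})$ together with routine bookkeeping; the only points requiring minor care are the $t_{-1}=0$, $t_0=1$ conventions at the base case and keeping straight the dual role of the $y$-update line, which is used first to define $y_{k+1}'$ and then, after rearrangement, to read off $z_{k+1}'$.
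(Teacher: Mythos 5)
Your proof is correct, and the invariant $z_k = x_k + t_k(y_k - x_k)$ is exactly the right quantity to track. Note, however, that the paper itself does not prove this lemma: it is cited from Nesterov's 2005 paper \cite{nesterov2005smooth} as a known reformulation, so there is no ``paper proof'' to compare against. Your inductive verification is the standard and natural argument, and all the algebra checks out: the base case uses $y_0 = x_0$ (making the value of $t_0$ irrelevant there); the $z$-update together with $-\tfrac{1}{L}\nabla f(y_k) = x_{k+1} - y_k$ and the invariant gives $z_{k+1} = x_k + t_k(x_{k+1}-x_k)$; substituting into the $y$-update collapses to $y_{k+1} = x_{k+1} + \tfrac{t_k-1}{t_{k+1}}(x_{k+1}-x_k)$; and rearranging the $y$-update line restores the invariant at index $k+1$. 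One small remark on presentation: the last step is not really a ``dual role'' of the $y$-update line --- $z_{k+1}$ is already defined by the $z$-update, and the invariant at $k+1$ is simply an algebraic rearrangement of the (already-established) $y$-update equation relating the three quantities. But this is a phrasing issue, not a gap.
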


\begin{lemma}
[{\cite{nesterov1983method}}]\label{lem:lyapunov}
For $x_\star\in\argmin f$ and $k=0,1,\dots$, let
    \[
    \mathcal{E}_k(x_\star) =t_{k-1}^2
\left(f(x_k)-f_\star\right)+\frac{L}{2}\|z_k-x_\star\|^2.
    \]
If $t_{k+1}^2-t_{k+1}\le t_k^2$, then $\mathcal{E}_{k+1}(x_\star)\le \mathcal{E}_k(x_\star)$ for $k=0,1,\dots$, and  $\{\mathcal{E}_k(x_\star)\}_{k \ge 0 }$ converges to a finite limit. 
\end{lemma}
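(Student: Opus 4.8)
The plan is to reconstruct Nesterov's classical Lyapunov argument, working through the equivalent $z$-sequence formulation of Lemma~\ref{lem:equiv}, which makes the bookkeeping transparent. The three ingredients are: the descent lemma (a consequence of $L$-smoothness), the convexity inequality evaluated at the extrapolated point $y_k$, and the three-term identity linking $x_k$, $y_k$, $z_k$ that is built into the reformulation, namely $z_k - y_k = (t_k-1)(y_k - x_k)$, which follows from $y_k = (1-\tfrac{1}{t_k})x_k + \tfrac{1}{t_k}z_k$.

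First, the descent lemma applied to $x_{k+1} = y_k - \tfrac{1}{L}\nabla f(y_k)$ gives $\tfrac{1}{2L}\|\nabla f(y_k)\|^2 \le f(y_k) - f(x_{k+1})$. Next, expanding the square using $z_{k+1} = z_k - \tfrac{t_k}{L}\nabla f(y_k)$ yields
\[
\frac{L}{2}\|z_{k+1}-x_\star\|^2 = \frac{L}{2}\|z_k-x_\star\|^2 - t_k\langle \nabla f(y_k),\, z_k - x_\star\rangle + \frac{t_k^2}{2L}\|\nabla f(y_k)\|^2 .
\]
I then split $z_k - x_\star = (z_k - y_k) + (y_k - x_\star) = (t_k-1)(y_k-x_k) + (y_k - x_\star)$ and bound the two resulting inner products via convexity, $\langle \nabla f(y_k), y_k - u\rangle \ge f(y_k) - f(u)$, taken at $u = x_k$ and $u = x_\star$; here I use $t_k \ge 1$ (which holds throughout the regimes of interest) so that the weight $t_k(t_k-1)$ multiplying the $u=x_k$ inequality is nonnegative. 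The $\tfrac{t_k^2}{2L}\|\nabla f(y_k)\|^2$ term is absorbed by $t_k^2$ times the descent-lemma bound, which cancels the gradient norms and replaces $f(y_k)$ by $f(x_{k+1})$.

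Assembling the three estimates, the $f(y_k)$ and gradient-norm contributions cancel and, after subtracting $t_k^2 f_\star$ from both sides, one is left with
\[
\mathcal{E}_{k+1}(x_\star) = t_k^2\bigl(f(x_{k+1}) - f_\star\bigr) + \frac{L}{2}\|z_{k+1}-x_\star\|^2 \le (t_k^2 - t_k)\bigl(f(x_k) - f_\star\bigr) + \frac{L}{2}\|z_k - x_\star\|^2 .
\]
Since $f(x_k) \ge f_\star$ and, by the hypothesis with the index shifted down, $t_k^2 - t_k \le t_{k-1}^2$, the right-hand side is at most $t_{k-1}^2(f(x_k)-f_\star) + \tfrac{L}{2}\|z_k-x_\star\|^2 = \mathcal{E}_k(x_\star)$. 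The base case $k=0$ works the same way: with $t_{-1}=0$, $z_{-1}=z_0$, and $t_0=1$ one has $t_0^2 - t_0 = 0 = t_{-1}^2$, so $\mathcal{E}_1(x_\star) \le \tfrac{L}{2}\|z_0-x_\star\|^2 = \mathcal{E}_0(x_\star)$. Thus $\{\mathcal{E}_k(x_\star)\}$ is nonincreasing; being bounded below by $0$, it converges to a finite limit.

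The main obstacle is purely in the bookkeeping: one has to verify that the convex combination of the two convexity inequalities, with weights $t_k(t_k-1)$ and $t_k$, collapses exactly to the coefficient $t_k^2 - t_k$ multiplying $f(x_k)-f_\star$, so that the step-size condition $t_{k+1}^2 - t_{k+1} \le t_k^2$ is precisely what makes the telescoping close; one must also keep the index shift between $\mathcal{E}_k$ (which carries $t_{k-1}^2$) and the one-step inequality (which produces $t_k^2$) straight, including the degenerate $k=0$ case.
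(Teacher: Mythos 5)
The paper cites this lemma to Nesterov (1983) without supplying a proof, so there is no internal argument to compare against; the task is to reconstruct the classical Lyapunov estimate, and you have done so correctly. Your route is the standard one: the descent lemma at $x_{k+1} = y_k - \tfrac{1}{L}\nabla f(y_k)$, convexity of $f$ at $y_k$ tested against both $x_k$ and $x_\star$, the identity $z_k - x_\star = (t_k-1)(y_k-x_k) + (y_k - x_\star)$ coming from the $z$-form of the iteration, and the algebraic cancellation of the $f(y_k)$ and $\|\nabla f(y_k)\|^2$ terms, with the telescoping closed exactly by the shifted condition $t_k^2 - t_k \le t_{k-1}^2$ together with $f(x_k)\ge f_\star$. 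The one hypothesis you invoke that is not explicitly listed in the lemma statement is $t_k \ge 1$, needed so the weight $t_k(t_k-1)$ on the $u=x_k$ convexity inequality is nonnegative; you flag this appropriately, and it is consistent with the paper, which likewise implicitly assumes it (for instance, the proof of the discrete-time NAG convergence theorem asserts $t_k - 1 > 0$ for $k \ge 2$ without deriving it from the listed hypotheses), and it holds for the standard parameter choices $t_{k+1} = \bigl(1+\sqrt{1+4t_k^2}\,\bigr)/2$ and $t_k = (k+2)/2$.
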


\begin{lemma}
    The sequence $\{x_k\}_{k\ge 0 }$ produced by NAG is bounded.
\end{lemma}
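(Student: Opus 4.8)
The plan is to combine two facts that are essentially already in hand: the Lyapunov estimate of Lemma~\ref{lem:lyapunov} pins the auxiliary sequence $\{z_k\}$ inside a fixed ball, and the equivalent formulation of Lemma~\ref{lem:equiv} lets one rewrite the $x$-update as a convex combination of $x_k$ and $z_{k+1}$. Boundedness of $\{x_k\}$ then drops out of a one-line induction, with \emph{no} assumption on $\argmin f$ being bounded --- in contrast to the continuous-time analysis of Section~2.

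First I would fix $x_\star\in\argmin f$ and observe that, since $t_{k-1}^2(f(x_k)-f_\star)\ge 0$, Lemma~\ref{lem:lyapunov} gives $\tfrac{L}{2}\|z_k-x_\star\|^2\le\mathcal{E}_k(x_\star)\le\mathcal{E}_0(x_\star)=\tfrac{L}{2}\|x_0-x_\star\|^2$, so $\|z_k-x_\star\|\le R:=\|x_0-x_\star\|$ for every $k\ge 0$; that is, $\{z_k\}$ is bounded.

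Next I would eliminate the gradient from the $x$-update. From $z_{k+1}=z_k-\tfrac{t_k}{L}\nabla f(y_k)$ we get $\tfrac{1}{L}\nabla f(y_k)=\tfrac{1}{t_k}(z_k-z_{k+1})$, and from $y_k=(1-\tfrac{1}{t_k})x_k+\tfrac{1}{t_k}z_k$ (the $y$-update of Lemma~\ref{lem:equiv} read at index $k$, valid also at $k=0$ since $t_0=1$ and $x_0=y_0=z_0$), substituting both into $x_{k+1}=y_k-\tfrac{1}{L}\nabla f(y_k)$ yields
\[
x_{k+1}=\Big(1-\frac{1}{t_k}\Big)x_k+\frac{1}{t_k}\,z_{k+1}.
\]
Because $t_k\to\infty$, there is a $K$ with $t_k\ge 1$ for all $k\ge K$; for those $k$ the coefficients $1-\tfrac{1}{t_k}$ and $\tfrac{1}{t_k}$ are nonnegative and sum to one, so the displayed identity exhibits $x_{k+1}$ as a convex combination of $x_k$ and $z_{k+1}$.

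Finally, the induction: the finitely many points $x_0,\dots,x_K$ are trivially bounded, so put $R':=\max\{R,\ \max_{0\le k\le K}\|x_k-x_\star\|\}$; then for $k\ge K$, assuming $\|x_k-x_\star\|\le R'$, the displayed identity and the triangle inequality give $\|x_{k+1}-x_\star\|\le(1-\tfrac{1}{t_k})\|x_k-x_\star\|+\tfrac{1}{t_k}\|z_{k+1}-x_\star\|\le R'$. Hence $\|x_k-x_\star\|\le R'$ for all $k$, so $\{x_k\}_{k\ge 0}$ is bounded. The only step that requires any idea is the gradient elimination that turns the $x$-update into a convex combination; everything else is bookkeeping, which is why I expect the derivation of the displayed identity to be the crux rather than an obstacle of real difficulty.
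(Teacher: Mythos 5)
Your proposal is correct and takes essentially the same route as the paper: eliminate the gradient via the equivalent (Lemma~\ref{lem:equiv}) form to obtain $x_{k+1}=(1-1/t_k)x_k+(1/t_k)z_{k+1}$, use Lemma~\ref{lem:lyapunov} to bound $\{z_k\}$, and close with a one-line induction. Your version is in fact slightly more careful than the paper's in noting that the convex-combination bound needs $t_k\ge 1$ and handling a finite initial segment separately if necessary; the paper applies the bound directly for all $k$.
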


\begin{proof}
From Lemma~\ref{lem:lyapunov}, $\{z_k\}_{k\ge 0}$ is bounded, and let $\sup_k\|z_k\|\le M<\infty$. From the equivalent form of NAG, we have 
\begin{align}
x_{k+1} &= y_k -\tfrac{1}{L}\nabla f(y_k) \nonumber\\
&=y_k+\tfrac{1}{t_k}(z_{k+1}-z_k)\nonumber\\
&=\big(1-\tfrac{1}{t_k}\big)x_k+\tfrac{1}{t_k}z_k+\tfrac{1}{t_k}(z_{k+1}-z_k)\nonumber\\
&=\big(1-\tfrac{1}{t_k}\big)x_k+\tfrac{1}{t_k}z_{k+1}. \label{eq:2}
\end{align}
Now assume $t_k > 1$ for $k \ge K$. Then,
\[
\|x_{k+1}\| \le \big(1-\tfrac{1}{t_k}\big)\|x_k\| + \tfrac{1}{t_k}\|z_{k+1}\| \le \max( \|x_k\|, M)
\]
for $k\ge K$ and we conclude 
\[
\|x_{k}\| \le \max( \|x_0\|,\dots,\|x_K\|, M) <\infty
\]
for all $k\ge 0$ by induction.
\end{proof}

\begin{lemma}
[{\cite[Lemma~A.4]{boct2025accelerating}}]
\label{lem:toeplitz}
    Let $\{h_k\}_{k\ge 0}\subset \mathbb{R}$ and $\{\varphi_k\}_{k\ge 0}\subset \mathbb{R}_{++}$ be sequences of real and positive numbers respectively. Assume $\sum_{k=0}^{\infty}\frac{1}{\varphi_k}=\infty$. If
    \[
    h_{k+1}+\varphi_k (h_{k+1} - h_k) \to c<\infty,
    \]
    then $h_k \to c$.
\end{lemma}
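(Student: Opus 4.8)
The plan is to recognize the recursion as an asymptotic contraction toward $c$. First I would solve for $h_{k+1}$: setting $a_k := h_{k+1} + \varphi_k(h_{k+1} - h_k)$, the rearrangement $(1+\varphi_k)h_{k+1} = a_k + \varphi_k h_k$ gives
\[
h_{k+1} = (1-\lambda_k)\,h_k + \lambda_k\, a_k,\qquad \lambda_k := \frac{1}{1+\varphi_k}\in(0,1),
\]
so $h_{k+1}$ is a genuine convex combination of $h_k$ and $a_k$, with $a_k\to c$ by hypothesis. Since subtracting $c$ from $h_k$ (and hence from $a_k$) preserves this structure, because $(1-\lambda_k)c+\lambda_k c = c$, I may assume $c=0$ and aim to prove $h_k\to 0$.

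Second, I would establish the key summability fact $\sum_{k\ge 0}\lambda_k=\infty$. This does not follow from $\sum 1/\varphi_k=\infty$ purely formally, since $\varphi_k$ need not be bounded or monotone, so a short case split is needed: on the index set where $\varphi_k\le 1$ one has $\lambda_k\ge 1/2$, while on its complement $\lambda_k\ge \tfrac{1}{2\varphi_k}$; if the former set is infinite the sum already diverges, and otherwise the latter set is cofinite and $\sum 1/\varphi_k=\infty$ forces $\sum\lambda_k=\infty$.

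Third, I would run the contraction estimate. Fix $\varepsilon>0$ and choose $N$ with $|a_k|<\varepsilon$ for all $k\ge N$. The convex combination yields $|h_{k+1}|\le (1-\lambda_k)|h_k| + \lambda_k\varepsilon$, hence $|h_{k+1}|-\varepsilon \le (1-\lambda_k)(|h_k|-\varepsilon)$, and passing to positive parts,
\[
(|h_{k+1}|-\varepsilon)^+ \le (1-\lambda_k)\,(|h_k|-\varepsilon)^+ \qquad (k\ge N).
\]
Iterating gives $(|h_k|-\varepsilon)^+ \le (|h_N|-\varepsilon)^+\prod_{j=N}^{k-1}(1-\lambda_j)$, and since $\lambda_j\in(0,1)$ with $\sum_j\lambda_j=\infty$ we have $\log\prod_{j=N}^{k-1}(1-\lambda_j)\le -\sum_{j=N}^{k-1}\lambda_j\to -\infty$, so the product tends to $0$. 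Therefore $\limsup_k|h_k|\le \varepsilon$, and as $\varepsilon>0$ was arbitrary, $h_k\to 0$, i.e. $h_k\to c$.

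I expect the only genuine obstacle to be the second step, converting $\sum 1/\varphi_k=\infty$ into $\sum \lambda_k=\infty$, since it is the one place where the precise hypothesis on $\{\varphi_k\}$ is used; the remainder is a routine discrete Gr\"onwall (Chung-type) argument.
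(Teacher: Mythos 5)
The paper does not prove this lemma; it is quoted directly from \cite[Lemma~A.4]{boct2025accelerating}, so there is no in-paper argument to compare against. Your proof is correct and self-contained, and it is a clean instance of the standard route for this kind of statement. The rewriting $h_{k+1}=(1-\lambda_k)h_k+\lambda_k a_k$ with $\lambda_k=\tfrac{1}{1+\varphi_k}\in(0,1)$, the reduction to $c=0$, the verification that $\sum\lambda_k=\infty$ follows from $\sum 1/\varphi_k=\infty$ (your case split is fine; equivalently $\lambda_k\ge\tfrac12\min\{1,1/\varphi_k\}$ and $\sum\min\{1,1/\varphi_k\}=\infty$), and the positive-part contraction $(|h_{k+1}|-\varepsilon)^+\le(1-\lambda_k)(|h_k|-\varepsilon)^+$ with $\prod_{j\ge N}(1-\lambda_j)=0$ all check out. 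The only stylistic alternative worth noting, in keeping with the label ``toeplitz,'' is to unroll the recursion to $h_n=\bigl(\prod_{j=0}^{n-1}(1-\lambda_j)\bigr)h_0+\sum_{k<n}w_{n,k}a_k$ with $w_{n,k}=\lambda_k\prod_{j=k+1}^{n-1}(1-\lambda_j)$, observe that the weights form a regular (Silverman--Toeplitz) summability matrix precisely because $\sum\lambda_k=\infty$, and conclude $h_n\to c$. That version makes the ``Toeplitz'' name transparent; your discrete Gr\"onwall argument is equivalent and arguably more elementary, as it avoids invoking the summability theorem.
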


\begin{theorem}\label{thm:nag}
The sequences $\{x_k\}_{k\ge 0}$ and $\{y_k\}_{k\ge 0}$ of NAG converge to the same minimizer. That is,
    \[
x_k\rightarrow x_\infty,\qquad
y_k\rightarrow x_\infty,\qquad
x_\infty\in \argmin f.
    \]
\end{theorem}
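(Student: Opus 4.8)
The plan is to transcribe the continuous-time argument for $r=3$ into discrete time, with the Toeplitz-type Lemma~\ref{lem:toeplitz} taking over the role played there by the integrating-factor computation.

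First I would assemble the ingredients for a cluster-point argument. Lemma~\ref{lem:lyapunov} gives $t_{k-1}^2\big(f(x_k)-f_\star\big)\le \mathcal{E}_0(x_\star)$, so $f(x_k)\to f_\star$ because $t_k\to\infty$; the same bound shows $\{z_k\}_{k\ge 0}$ is bounded. Combined with boundedness of $\{x_k\}_{k\ge 0}$, the sequence $\{x_k\}_{k\ge 0}$ has at least one cluster point, and since $f$ is continuous and $f(x_k)\to f_\star$, every cluster point belongs to $\argmin f$. If $\{x_k\}_{k\ge 0}$ has a single cluster point we are done; otherwise I fix two cluster points $z_1\ne z_2$ and aim for a contradiction.

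The heart of the argument is to show that $d_k:=\|x_k-z_1\|^2-\|x_k-z_2\|^2$ converges. Starting from the equivalent representation $x_{k+1}=\big(1-\tfrac{1}{t_k}\big)x_k+\tfrac{1}{t_k}z_{k+1}$ of \eqref{eq:2} and the elementary identity
\[
\big\|(1-\lambda)a+\lambda b-c\big\|^2=(1-\lambda)\|a-c\|^2+\lambda\|b-c\|^2-\lambda(1-\lambda)\|a-b\|^2,
\]
applied with $a=x_k$, $b=z_{k+1}$, $\lambda=\tfrac{1}{t_k}$, the remainder term $\|x_k-z_{k+1}\|^2$ does not involve the reference point, so subtracting the identity for the two reference points $z_1$ and $z_2$ gives the clean recursion
\[
d_{k+1}=\Big(1-\tfrac{1}{t_k}\Big)d_k+\tfrac{1}{t_k}\,\delta_{k+1},\qquad
\delta_k:=\|z_k-z_1\|^2-\|z_k-z_2\|^2,
\]
equivalently $d_{k+1}+(t_k-1)(d_{k+1}-d_k)=\delta_{k+1}$. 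Now $\delta_k=\tfrac{2}{L}\big(\mathcal{E}_k(z_1)-\mathcal{E}_k(z_2)\big)$ since the $t_{k-1}^2(f(x_k)-f_\star)$ terms cancel in the subtraction, so $\delta_k$ converges to a finite limit by Lemma~\ref{lem:lyapunov}. From $t_{k+1}^2-t_{k+1}\le t_k^2$ one gets $t_{k+1}\le 1+t_k$, hence $t_k=\mathcal{O}(k)$ and $\sum_k\tfrac{1}{t_k-1}=\infty$ over the tail on which $t_k>1$; applying Lemma~\ref{lem:toeplitz} with $h_k=d_k$ and $\varphi_k=t_k-1$ then yields that $d_k$ converges.

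Finally I would close the argument exactly as in continuous time: along a subsequence with $x_{k_j}\to z_1$ we get $d_{k_j}\to-\|z_1-z_2\|^2$, while along a subsequence with $x_{m_j}\to z_2$ we get $d_{m_j}\to\|z_1-z_2\|^2$; since $d_k$ has a limit these must coincide, forcing $z_1=z_2$. Hence $\{x_k\}_{k\ge 0}$ has a unique cluster point $x_\infty\in\argmin f$, so $x_k\to x_\infty$, and then $y_k\to x_\infty$ follows from the identity $y_k-x_k=\tfrac{1}{t_k}(z_k-x_k)$ together with boundedness of $\{z_k-x_k\}_{k\ge 0}$ and $t_k\to\infty$. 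I expect the main obstacle to be the bookkeeping that turns the $d_k$ recursion into precisely the form $h_{k+1}+\varphi_k(h_{k+1}-h_k)\to c$ with $\sum_k 1/\varphi_k=\infty$ demanded by Lemma~\ref{lem:toeplitz}, since everything else is a routine translation of the $r=3$ continuous-time proof.
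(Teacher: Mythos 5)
Your proposal is correct and follows essentially the same approach as the paper: you show every cluster point is a minimizer, track $d_k=\|x_k-z_1\|^2-\|x_k-z_2\|^2$, derive the recursion $d_{k+1}+(t_k-1)(d_{k+1}-d_k)=\tfrac{2}{L}\big(\mathcal{E}_{k+1}(z_1)-\mathcal{E}_{k+1}(z_2)\big)$ from $z_{k+1}=t_kx_{k+1}-(t_k-1)x_k$, invoke Lemma~\ref{lem:toeplitz}, and close via the two-subsequence contradiction. The only cosmetic difference is that you obtain the recursion through the three-point convexity identity rather than by expanding the squares into inner products, but the computation is the same.
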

\begin{proof}
Since $\{x_k\}_{k\ge 0}\subset\mathbb{R}^n$ is bounded, the sequence has at least one cluster point. Also, together with
\[
t_{k-1}^2(f(x_k)-f_\star)\le \mathcal{E}_k(x_\star)\le \mathcal{E}_{0}(x_\star)=\frac{L}{2}\|x_0-x_\star\|^2
\]
and $t_k\to \infty$, we have $f(x_k)\rightarrow f_\star$. Since $f$ is continuous, all cluster points attain the function value $f_\star$ and therefore are minimizers.
If there is only one cluster point, $x_k$ converges to a limit, and we are done.

Therefore, assume $x_{\star}$ and $\tilde{x}_\star$ are cluster points of $\{x_k\}_{k\ge 0 }$, so $x_\star, \tilde{x}_\star\in \argmin f$.
We will now show $x_\star= \tilde{x}_\star$. Let
\[
h_k := \|x_k- x_\star\|^2 - \|x_k- \tilde{x}_\star\|^2, 
\qquad 
H_k := \mathcal{E}_{k}(x_\star) - \mathcal{E}_{k}(\tilde{x}_\star).
\]
Then, 
\[
\begin{aligned}
\frac{2}{L}H_{k+1} &= -2\langle z_{k+1}, x_\star-\tilde{x}_\star\rangle + \|x_\star\|^2-\|\tilde{x}_\star\|^2\\
h_{k+1}  &= -2\langle x_{k+1}, x_\star-\tilde{x}_\star\rangle + \|x_\star\|^2-\|\tilde{x}_\star\|^2\\
h_k & = -2\langle x_{k}, x_\star-\tilde{x}_\star\rangle + \|x_\star\|^2-\|\tilde{x}_\star\|^2
\end{aligned}
\]
We use the equality $t_k x_{k+1}-(t_k-1)x_k = z_{k+1}$ from \eqref{eq:2} to get 
\[
t_k h_{k+1}-(t_k-1)h_k=h_{k+1}+(t_k-1)(h_{k+1}-h_k)=\frac{2}{L}H_{k+1}.
\]
Note that there exists $K$ such that $t_k-1>0$ 
for $k\ge K$. Also, from $t_{k+1}\le \frac{1+\sqrt{1+4t_k^2}}{2}$ and $\sqrt{1+4t_k^2}\le 2t_k+1$, we have
    \[
    t_{k+1}\le t_k+1
    \]
and therefore $t_k\le t_0+k=k+1$. So,
\[
\sum_{k=K}^{\infty}\frac{1}{t_k-1} \ge \sum_{k=K}^{\infty}\frac{1}{k}=\infty. 
\]
Lastly, $\frac{2}{L}H_{k+1}$ converges. Thus, by Lemma~\ref{lem:toeplitz}, $h_k\to \frac{2}{L}H_\infty$.
Passing $h_k$ through two subsequences $\{x_{m_k}\}_{k\ge 0}$ and $\{x_{n_k}\}_{k\ge 0}$ such that
$x_{m_k}\rightarrow x_\star$ with $m_k\to\infty$ and $x_{n_k}\rightarrow \tilde{x}_\star$ with $n_k\to\infty$ respectively, we get 
\[
\lim_{k\to \infty}h_{m_k} = \|x_\star-x_\star\|^2-\|x_\star-\tilde{x}_\star\|^2=-\|x_\star-\tilde{x}_\star\|^2
\]
and
\[
\lim_{k\to \infty}h_{n_k} = \|\tilde{x}_\star-x_\star\|^2-\|\tilde{x}_\star-\tilde{x}_\star\|^2= \|\tilde{x}_\star-x_\star\|^2.
\]
The two values must coincide, therefore $x_\star=\tilde{x}_\star$ and $\{x_k\}_{k \ge 0 }$ converges to $x_\infty\in \argmin f$. Convergence of $\{y_k\}_{k \ge 0}$ then follows from $t_k \to \infty$ and the boundedness of $\{z_k\}_{k\ge 0}$:
\[
y_{k+1}=\left(1-\frac{1}{t_{k+1}}\right)x_{k+1}+\frac{1}{t_{k+1}}z_{k+1}\to x_\infty.
\]
\end{proof}

\subsection{Point convergence of optimized gradient method}
The optimized gradient method (OGM) is a modern variant of the NAG method that improves the convergence guarantee by a factor of $2$ and attains the exact optimal worst-case rate by exactly matching the complexity lower bound of \cite{drori2017exact,drori2022oracle}. OGM was obtained through a computer-assisted methodology called the performance estimation problem (PEP) \cite{drori2014performance,kim2016optimized,taylor2017smooth}. In this section, we show that OGM also enjoys pointwise convergence, demonstrating that its improved worst-case rate over NAG does not come at the expense of point convergence.

Specifically, we consider the version of OGM without the so-called last-step correction:
\begin{align*}
\begin{aligned}    
x_{k+1} &= y_k - \frac{1}{L} \nabla f(y_k), \\
y_{k+1} &= x_{k+1} + \frac{\theta_k-1}{\theta_{k+1}} (x_{k+1} - x_k) + \frac{\theta_k}{\theta_{k+1}} (x_{k+1} - y_k)
\end{aligned}
\qquad\text{for }k=0,1,\dots
\end{align*}
with $x_0=y_0\in \mathbb{R}^n$, and $\{\theta_k\}_{k=0,1,\dots}\subset\mathbb{R}_{++}$ satisfying $\theta_0=1$ and  $\theta_{k+1} = (1 + \sqrt{1 + 4\theta_k^2})/2$ for $k=0,1,\dots$. For notational convenience, also define $\theta_{-1}=0$. Strictly speaking, the last-step correction is required for OGM to attain the exact optimal worst-case rate. But even without this correction, OGM still achieves the factor-$2$ improvement over NAG \cite{park2023factor}.

\begin{lemma}
[{\cite[Proposition~5]{kim2016optimized}}]
\label{lem:equiv2}
The following method is equivalent to OGM in the sense that it produces the same sequences $\{x_k\}_{k \ge 0}$ and $\{y_k\}_{k \ge 0}$:
\begin{align*}
\begin{aligned}    
x_{k+1}&=y_k-\frac{1}{L} \nabla f(y_k)\\
z_{k+1}&=z_k-\frac{2\theta_k}{L}\nabla f(y_k)\\
y_{k+1}&=\left(1-\frac{1}{\theta_{k+1}}\right)x_{k+1}+\frac{1}{\theta_{k+1}}z_{k+1}
\end{aligned}
\qquad\text{for }k=0,1,\dots,
\end{align*}
where $x_0=y_0=z_0\in \mathbb{R}^n$.
\end{lemma}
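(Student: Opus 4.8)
The plan is to follow the same route as Lemma~\ref{lem:equiv}, namely to introduce the auxiliary ``mirror'' sequence $\{z_k\}_{k\ge 0}$ explicitly and check that the two-step OGM recursion and the three-sequence recursion produce identical $\{x_k\}_{k\ge 0}$ and $\{y_k\}_{k\ge 0}$. Concretely, starting from the OGM iterates $\{x_k\},\{y_k\}$, I would define $z_0=x_0$ and $z_{k+1}=z_k-\frac{2\theta_k}{L}\nabla f(y_k)$ for $k\ge 0$. The first line of the three-sequence form is then literally the OGM $x$-update, and the second line is the definition of $z$; so the only thing that needs proof is that the OGM momentum step coincides with $y_{k+1}=\bigl(1-\tfrac{1}{\theta_{k+1}}\bigr)x_{k+1}+\tfrac{1}{\theta_{k+1}}z_{k+1}$.

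The key is the invariant
\[
y_k=\Bigl(1-\tfrac{1}{\theta_k}\Bigr)x_k+\tfrac{1}{\theta_k}z_k,\qquad\text{equivalently}\qquad z_k=x_k+\theta_k(y_k-x_k),
\]
which I would prove by induction on $k\ge 0$. The base case $k=0$ holds since $\theta_0=1$ and $x_0=y_0=z_0$. For the inductive step, I would use $\nabla f(y_k)=L(y_k-x_{k+1})$ from the $x$-update to rewrite the $z$-recursion as $z_{k+1}=z_k+2\theta_k(x_{k+1}-y_k)$, substitute the inductive hypothesis $z_k=x_k+\theta_k(y_k-x_k)$, and simplify to obtain
\[
z_{k+1}=x_{k+1}+(\theta_k-1)(x_{k+1}-x_k)+\theta_k(x_{k+1}-y_k),
\]
the OGM analog of the NAG identity $z_{k+1}=t_kx_{k+1}-(t_k-1)x_k$ appearing in \eqref{eq:2}. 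Dividing this by $\theta_{k+1}$ and adding $x_{k+1}$ recovers exactly the OGM momentum step,
\begin{align*}
\Bigl(1-\tfrac{1}{\theta_{k+1}}\Bigr)x_{k+1}+\tfrac{1}{\theta_{k+1}}z_{k+1}
&=x_{k+1}+\frac{\theta_k-1}{\theta_{k+1}}(x_{k+1}-x_k)+\frac{\theta_k}{\theta_{k+1}}(x_{k+1}-y_k)\\
&=y_{k+1},
\end{align*}
which simultaneously verifies the third line of the three-sequence form and carries the invariant to index $k+1$.

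I do not expect a real obstacle here, as the statement is purely algebraic and is already recorded as \cite[Proposition~5]{kim2016optimized}; the plan is essentially bookkeeping. The one place to be careful is the coefficient $2\theta_k$ in the $z$-recursion — it is the NAG coefficient $\theta_k$ plus an extra $\theta_k$ that is forced precisely by the additional momentum term $\frac{\theta_k}{\theta_{k+1}}(x_{k+1}-y_k)$ distinguishing OGM from NAG — and noting that, unlike in the subsequent Lyapunov analysis, the recursion $\theta_{k+1}^2-\theta_{k+1}=\theta_k^2$ is \emph{not} needed for this equivalence; only $\theta_0=1$ and positivity of $\{\theta_k\}$ are used.
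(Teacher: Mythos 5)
Your proof is correct: the induction on the invariant $z_k = x_k + \theta_k(y_k - x_k)$ (base case from $\theta_0=1$, $x_0=y_0=z_0$; inductive step by substituting $\nabla f(y_k)=L(y_k-x_{k+1})$ into the $z$-recursion and simplifying) is the standard algebraic verification, and the arithmetic checks out. The paper itself does not give a proof of this lemma---it is cited as a known result from Kim and Fessler, \cite[Proposition~5]{kim2016optimized}---so there is no in-paper argument to compare against; your self-contained verification, and your side remark that the recursion $\theta_{k+1}^2-\theta_{k+1}=\theta_k^2$ is irrelevant here (only $\theta_0=1$ and positivity of $\theta_{k+1}$ are used), are both accurate.
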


\begin{lemma}
[{\cite[Theorem~11]{taylor2019stochastic}, \cite[Theorem~4.4]{d2021acceleration}}]
\label{lem:lyapunov2}
For $x_\star\in\argmin f$ and $k=0,1,2,\dots$, let
    \[
    \mathcal{E}_k(x_\star) =2\theta_{k}^2
\big(\underbrace{f(y_k)-f_\star-\frac{1}{2L}\|\nabla f(y_k)\|^2}_{\ge 0, \textup{ by cocoercivity } }\big)+\frac{L}{2}\|z_{k+1}-x_\star\|^2,
    \]
where $y_{-1}=y_0$ and $z_{-1}=z_0$. Then $\mathcal{E}_{k+1}(x_\star)\le \mathcal{E}_k(x_\star)$ for $k=-1,0,1,\dots$, and $\{\mathcal{E}_k(x_\star)\}_{k \ge -1 }$ converges to a finite limit. 
\end{lemma}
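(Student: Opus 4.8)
The plan is to establish the monotonicity $\mathcal{E}_{k+1}(x_\star)\le\mathcal{E}_k(x_\star)$ by a direct algebraic computation in the spirit of the classical estimating-sequence analysis of OGM from \cite{kim2016optimized,park2023factor}, and then to deduce convergence from the fact that $\mathcal{E}_k(x_\star)\ge 0$ for every $k$. Nonnegativity is immediate: the term $\frac{L}{2}\|z_{k+1}-x_\star\|^2$ is a square, and the underbraced term is nonnegative by cocoercivity between $x_\star$ and $y_k$, which gives $f(y_k)-f_\star\ge\frac{1}{2L}\|\nabla f(y_k)\|^2$ since $\nabla f(x_\star)=0$. A nonincreasing sequence bounded below converges, so all the work lies in the monotonicity.

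For the monotonicity I would expand $\mathcal{E}_{k+1}(x_\star)-\mathcal{E}_k(x_\star)$ and treat its two pieces separately. Using the $z$-update $z_{k+2}=z_{k+1}-\frac{2\theta_{k+1}}{L}\nabla f(y_{k+1})$ from Lemma~\ref{lem:equiv2}, the distance piece becomes $-2\theta_{k+1}\langle\nabla f(y_{k+1}),z_{k+1}-x_\star\rangle+\frac{2\theta_{k+1}^2}{L}\|\nabla f(y_{k+1})\|^2$. For the function-value piece I would bound $f(y_{k+1})-f_\star$ from above in two ways by cocoercivity: once between $y_k$ and $y_{k+1}$ to relate $f(y_{k+1})$ to $f(y_k)$, and once between $y_{k+1}$ and $x_\star$; then combine these with weights $2\theta_k^2$ and $2\theta_{k+1}$ respectively, after writing $2\theta_{k+1}^2\big(f(y_{k+1})-f_\star\big)=2\theta_k^2\big(f(y_{k+1})-f_\star\big)+2\theta_{k+1}\big(f(y_{k+1})-f_\star\big)$ via the defining recursion $\theta_{k+1}^2-\theta_{k+1}=\theta_k^2$. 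The $2\theta_k^2\big(f(y_k)-f_\star\big)$ that appears then cancels against the matching term hidden inside $\mathcal{E}_k$, and the same recursion collapses the $\|\nabla f(y_{k+1})\|^2$ coefficients.

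After these cancellations, the only surviving quantity should be the single inner product $2\big\langle\nabla f(y_{k+1}),\,\theta_k^2(y_{k+1}-y_k)+\tfrac{\theta_k^2}{L}\nabla f(y_k)+\theta_{k+1}(y_{k+1}-z_{k+1})\big\rangle$, and the crux — the step I expect to be the main obstacle — is to show that the vector in the second slot is exactly $0$. I would verify this by substituting $\frac{1}{L}\nabla f(y_k)=y_k-x_{k+1}$, $y_{k+1}-x_{k+1}=\frac{1}{\theta_{k+1}}(z_{k+1}-x_{k+1})$, and $\theta_{k+1}(y_{k+1}-z_{k+1})=(\theta_{k+1}-1)(x_{k+1}-z_{k+1})$, all read off from the equivalent form in Lemma~\ref{lem:equiv2}; the vector then reduces to $\big(\tfrac{\theta_k^2}{\theta_{k+1}}-(\theta_{k+1}-1)\big)(z_{k+1}-x_{k+1})$, whose scalar prefactor vanishes precisely because $\theta_{k+1}^2-\theta_{k+1}=\theta_k^2$. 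Finally, the base case $k=-1$ must be checked separately: there $\theta_{-1}=0$, $z_{-1}=z_0=x_0$, $y_{-1}=y_0=x_0$, so $\mathcal{E}_{-1}(x_\star)=\frac{L}{2}\|x_0-x_\star\|^2$, and $\mathcal{E}_0(x_\star)\le\mathcal{E}_{-1}(x_\star)$ follows from one application of cocoercivity between $x_0$ and $x_\star$ together with $z_1=x_0-\frac{2}{L}\nabla f(x_0)$. Since the key identity above is an \emph{equality} rather than an inequality, the whole argument is simply the linear-combination-of-cocoercivity-inequalities certificate that is by now standard for OGM-type methods, which is also why one should expect it to close cleanly.
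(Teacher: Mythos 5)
Your proposal is correct and complete, and it reconstructs precisely the standard potential-function argument used in the cited reference \cite[Theorem~1]{park2023factor}; the paper itself does not give an internal proof of this lemma, it only cites that result. In particular, your identification of the residual inner product $2\bigl\langle \nabla f(y_{k+1}),\,\theta_k^2(y_{k+1}-y_k)+\tfrac{\theta_k^2}{L}\nabla f(y_k)+\theta_{k+1}(y_{k+1}-z_{k+1})\bigr\rangle$ as the only term surviving the cancellations, and your verification that the vector in the second slot vanishes via $\tfrac{1}{L}\nabla f(y_k)=y_k-x_{k+1}$, $y_{k+1}-x_{k+1}=\tfrac{1}{\theta_{k+1}}(z_{k+1}-x_{k+1})$, $\theta_{k+1}(y_{k+1}-z_{k+1})=(\theta_{k+1}-1)(x_{k+1}-z_{k+1})$, and the recursion $\theta_{k+1}^2-\theta_{k+1}=\theta_k^2$, are all exactly right, as is your separate check of the $k=-1$ base case.
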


\begin{lemma}
    The sequences $\{x_k\}_{k \ge 0}$ and $\{y_k\}_{k \ge 0}$ produced by OGM are bounded.
\end{lemma}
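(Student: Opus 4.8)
The plan is to mirror, essentially verbatim, the boundedness argument already given for NAG. First I would extract boundedness of $\{z_k\}_{k\ge 0}$ from Lemma~\ref{lem:lyapunov2}: since the cocoercivity term in $\mathcal{E}_k(x_\star)$ is nonnegative,
\[
\frac{L}{2}\|z_{k+1}-x_\star\|^2 \le \mathcal{E}_k(x_\star) \le \mathcal{E}_{-1}(x_\star) = \frac{L}{2}\|z_0 - x_\star\|^2,
\]
where the last equality uses $\theta_{-1}=0$. Hence $\|z_k - x_\star\| \le \|z_0-x_\star\|=:M$ for all $k \ge 0$.

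Next I would rewrite the $x$-update purely in terms of $x_k$, $z_k$, $z_{k+1}$. Using the equivalent form of OGM in Lemma~\ref{lem:equiv2}, eliminate $\nabla f(y_k)$ via $-\tfrac1L\nabla f(y_k) = \tfrac{1}{2\theta_k}(z_{k+1}-z_k)$ and substitute $y_k = (1-\tfrac1{\theta_k})x_k + \tfrac1{\theta_k}z_k$; a short calculation yields
\[
x_{k+1} = \Bigl(1-\frac{1}{\theta_k}\Bigr)x_k + \frac{1}{2\theta_k}z_k + \frac{1}{2\theta_k}z_{k+1}.
\]
Since $\theta_0 = 1$ and $\theta_{k+1}^2 - \theta_{k+1} = \theta_k^2$ force $\theta_k \ge 1$ for all $k\ge 0$, the three coefficients are nonnegative and sum to one, so $x_{k+1}$ is a convex combination of $x_k$, $z_k$, $z_{k+1}$. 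Subtracting $x_\star$ and applying the triangle inequality,
\[
\|x_{k+1}-x_\star\| \le \Bigl(1-\frac{1}{\theta_k}\Bigr)\|x_k-x_\star\| + \frac{1}{2\theta_k}\|z_k-x_\star\| + \frac{1}{2\theta_k}\|z_{k+1}-x_\star\| \le \max\bigl(\|x_k-x_\star\|,\,M\bigr),
\]
and hence, by induction, $\|x_k - x_\star\| \le \max(\|x_0-x_\star\|,M) = \|x_0-x_\star\|$ (recall $x_0=z_0$). This gives boundedness of $\{x_k\}_{k\ge 0}$. Finally, $\{y_k\}_{k\ge 0}$ is bounded because $y_0 = x_0$ and $y_{k+1} = (1-\tfrac1{\theta_{k+1}})x_{k+1} + \tfrac1{\theta_{k+1}}z_{k+1}$ is a convex combination of the bounded sequences $\{x_k\}$ and $\{z_k\}$.

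There is no substantial obstacle here; the only point requiring a little care is that, unlike in NAG where the $x$-update collapses to a \emph{two}-term convex combination of $x_k$ and $z_{k+1}$, the factor $2\theta_k$ (rather than $\theta_k$) appearing in the $z$-step of OGM leaves a residual $\tfrac{1}{2\theta_k}z_k$ term, so one must work with the three-term combination above and verify that its weights are genuinely nonnegative, which reduces to the elementary fact $\theta_k \ge 1$.
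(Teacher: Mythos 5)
Your proof is correct, and it follows the same general template as the paper (extract boundedness of $\{z_k\}$ from Lemma~\ref{lem:lyapunov2}, then run an induction), but it tracks a different sequence. The paper eliminates $x$ in favor of $y$: it combines the two updates to obtain
\[
\|y_{k+1}\| \le \Bigl(1-\tfrac{1}{\theta_{k+1}}\Bigr)\|y_k\| + \frac{2M}{\theta_{k+1}},
\]
which requires a small side estimate using $\theta_{k+1}\le\theta_k+1$ to package the two $z$-contributions into a single $2M/\theta_{k+1}$ term, and then inducts to $\|y_k\|\le B:=\max\{\|x_0\|,2M\}$ before bounding $\{x_k\}$. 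You instead eliminate $y$ in favor of $x$: substituting $y_k=(1-\tfrac1{\theta_k})x_k+\tfrac1{\theta_k}z_k$ (valid for $k\ge 0$ since $\theta_0=1$ gives $y_0=z_0$) into $x_{k+1}=y_k+\tfrac{1}{2\theta_k}(z_{k+1}-z_k)$ yields the exact identity
\[
x_{k+1}=\Bigl(1-\tfrac{1}{\theta_k}\Bigr)x_k+\tfrac{1}{2\theta_k}z_k+\tfrac{1}{2\theta_k}z_{k+1},
\]
a genuine convex combination once $\theta_k\ge1$ is observed. This buys you a cleaner induction: $\|x_{k+1}-x_\star\|\le\max(\|x_k-x_\star\|,M)$ follows directly from convexity of the norm, with no need for the $\theta_{k+1}\le\theta_k+1$ estimate and no auxiliary constant $B$, and boundedness of $\{y_k\}$ then falls out for free since $y_{k+1}$ is itself a convex combination of $x_{k+1}$ and $z_{k+1}$. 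Your version is arguably the tighter of the two and mirrors the NAG boundedness argument more faithfully; the one extra point you correctly flagged is verifying $\theta_k\ge1$, which is immediate from $\theta_0=1$ and $\theta_{k+1}^2-\theta_{k+1}=\theta_k^2$.
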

\begin{proof}
From the OGM updates, we have
\[
x_{k+1} = y_k + \frac{z_{k+1}-z_k}{2\theta_k},
\qquad 
y_{k+1} = \Bigl(1-\frac{1}{\theta_{k+1}}\Bigr)x_{k+1} + \frac{1}{\theta_{k+1}}z_{k+1}.
\]
By Lemma~\ref{lem:lyapunov2}, $\{z_k\}$ is bounded, i.e., $\sup_k\|z_k\|\le M$. 
Then,
\begin{align}
\|x_{k+1}\|
&\le \|y_k\| + \frac{\|z_{k+1}\|+\|z_k\|}{2\theta_k}
 \le \|y_k\| + \frac{M}{\theta_k} \label{eq:ybound}, \\
\|y_{k+1}\|
&\le \Bigl(1-\frac{1}{\theta_{k+1}}\Bigr)\|x_{k+1}\| + \frac{M}{\theta_{k+1}} \label{eq:xbound0}
\end{align}
since $\theta_{k+1} \ge 1$ for $k\ge 0$.
Substituting \eqref{eq:ybound} into \eqref{eq:xbound0} gives
\begin{equation*}
\|y_{k+1}\|
\le \Bigl(1-\frac{1}{\theta_{k+1}}\Bigr)\|y_k\|
+ M\Bigl(\frac{1}{\theta_{k+1}}+\frac{1}{\theta_k}\Bigl(1-\frac{1}{\theta_{k+1}}\Bigr)\Bigr).
\end{equation*}
Since $\theta_{k+1}\le \theta_k + 1$,
\begin{align*}
\frac{1}{\theta_{k+1}}+\frac{1}{\theta_k}\Bigl(1-\frac{1}{\theta_{k+1}}\Bigr)&=\frac{1}{\theta_{k+1}}+\frac{1}{\theta_k}-\frac{1}{\theta_k\theta_{k+1}}\\
&=\frac{1}{\theta_{k+1}}+\frac{\theta_{k+1}-1}{\theta_k\theta_{k+1}}\\
&\le\frac{1}{\theta_{k+1}}+ \frac{\theta_k}{\theta_k\theta_{k+1}}\\
&=\frac{2}{\theta_{k+1}}.
\end{align*}
Hence,
\begin{equation}\label{eq:recursion}
\|y_{k+1}\|
\le \Bigl(1-\frac{1}{\theta_{k+1}}\Bigr)\|y_k\|+\frac{2M}{\theta_{k+1}}.
\end{equation}
Let $B:=\max\{\|y_0\|,\,2M\}$.  
We show by induction that $\|y_k\|\le B$ for all $k$. For $k=0$, this holds by the definition of $B$.  
Assume $\|y_k\|\le B$. Then from \eqref{eq:recursion},
\[
\|y_{k+1}\|
\le \Bigl(1-\frac{1}{\theta_{k+1}}\Bigr)B+\frac{2M}{\theta_{k+1}}
\le \Bigl(1-\frac{1}{\theta_{k+1}}\Bigr)B+\frac{B}{\theta_{k+1}}=B,
\]
since $2M\le B$.  
Thus, by induction, $\{y_k\}_{k \ge0}$ is bounded. Finally, by \eqref{eq:ybound},
\[
\|x_{k+1}\|\le \|y_k\|+\frac{M}{\theta_k}\le B+M,
\]
showing that $\{x_k\}_{k \ge0}$ is also bounded.
\end{proof}
\begin{theorem}
\label{thm:ogm}
    The sequences $\{x_k\}_{k\ge 0}$ and $\{y_k\}_{k\ge 0}$ of OGM converge to the same minimizer. That is,
    \[
x_k\rightarrow x_\infty,\qquad
y_k\rightarrow x_\infty,\qquad
x_\infty\in \argmin f.
    \]
\end{theorem}
\begin{proof}
Since $\{x_k\}_{k\ge 0}\subset\mathbb{R}^n$ is bounded, the dynamics have at least one cluster point. Also, 
\begin{align*}
2\theta_{k-1}^{2}\bigl(f(x_k)-f_\star\bigr)
&\;\le\; 2\theta_{k-1}^{2}\Big(f(y_{k-1})-f_\star-\frac{1}{2L}\,\|\nabla f(y_{k-1})\|^{2}\Big) \\
&\le \mathcal{E}_{k-1}(x_\star) \;\le\;  \mathcal{E}_{-1}(x_\star)
\;=\; \frac{L}{2}\,\|z_{0}-x_\star\|^{2}. 
\end{align*}
Together with $\theta_k \to \infty$ implies $f(x_k)\to f_\star$. Hence, all cluster points attain the function value $f_\star$ and therefore are minimizers. As in the NAG case, let $x_\star, \tilde{x}_\star \in \arg\min f$ be two cluster points of $\{x_k\}_{k \ge 0}$. Let
\[
h_k^x := \|x_k - x_\star\|^2 - \|x_k - \tilde{x}_\star\|^2,
\qquad
h_k^y := \|y_k - x_\star\|^2 - \|y_k - \tilde{x}_\star\|^2,
\]
and
\[
H_k := \mathcal{E}_k(x_\star) - \mathcal{E}_k(\tilde{x}_\star).
\]
From the definition of $\mathcal{E}_k(x_\star)$, the only dependence on $x_\star$ is through $\|z_{k+1} - x_\star\|^2$,
so for any $x_\star, \tilde{x}_\star \in \arg\min f$ we have
\[
\frac{2}{L}H_{k}
= -2\langle z_{k+1},\, x_\star - \tilde{x}_\star \rangle
  + \|x_\star\|^2 - \|\tilde{x}_\star\|^2.
\]
Using the identity
\[
\theta_{k+1}y_{k+1} - (\theta_{k+1}-1)x_{k+1} = z_{k+1},
\]
and the relations
\[
-2\langle x_{k+1},\, x_\star - \tilde{x}_\star \rangle
   = h_{k+1}^x - \|x_\star\|^2 +\|\tilde{x}_\star\|^2, \quad
-2\langle y_{k+1},\, x_\star - \tilde{x}_\star \rangle
   = h_{k+1}^y - \|x_\star\|^2 +\|\tilde{x}_\star\|^2,
\]
we obtain
\begin{equation}\label{eq:thmogm1}
\theta_{k+1}h_{k+1}^y - (\theta_{k+1}-1)h_{k+1}^x = \frac{2}{L}H_{k}.    
\end{equation}
Furthermore, since
\[
x_{k+1} = y_k + \frac{z_{k+1}-z_k}{2\theta_k},
\]
we can similarly derive
\begin{equation}\label{eq:thmogm2}
   h_{k+1}^x = h_k^y + \frac{1}{L\theta_k}(H_{k} - H_{k-1}). 
\end{equation}
Combining the \eqref{eq:thmogm1} and \eqref{eq:thmogm2} yields the recursion
\[
\theta_{k+1}h_{k+1}^y
= (\theta_{k+1}-1)h_k^y
+ \Biggl[\frac{\theta_{k+1}-1}{L\theta_k}(H_{k}-H_{k-1})
+ \frac{2}{L}H_{k}\Biggr].
\]
Or equivalently,
\begin{equation}\label{eq:ogmrecursion}
  h_{k+1}^y + (\theta_{k+1}-1)(h_{k+1}^y-h_{k}^y)=\frac{1}{L}\bigg(\frac{\theta_{k+1}-1}{\theta_k}(H_{k}-H_{k-1})
+ 2H_{k}\bigg)  
\end{equation}
Note we have $\theta_{k+1}-1>0$ for $k\ge 0$. Also, as we already proved in the proof of Theorem~\ref{thm:nag}, we have
\[
\sum_{k=0}^{\infty}\frac{1}{\theta_{k+1}-1}\ge \sum_{k=0}^{\infty}\frac{1}{k+2-1}=\infty.
\]
 So, to apply Lemma~\ref{lem:toeplitz}, it suffices to show the convergence of the right-hand side of \eqref{eq:ogmrecursion}. This follows from $H_k\to H_\infty$ and 
\[
\frac{\theta_{k+1}-1}{\theta_k}|H_{k}-H_{k-1}|\le \frac{\theta_k+1-1}{\theta_k}|H_{k}-H_{k-1}|\to 0.
\]
Hence $h^y_{k+1}$ converges and we use the subsequence argument to conclude $\{y_k\}_{k\ge0}$ converges. Now since $y_{k+1}=(1-\frac{1}{\theta_{k+1}})x_{k+1}+\frac{1}{\theta_{k+1}}z_{k+1}$ with $\theta_k\to \infty$ and $\{x_k\}_{k \ge0}$, $\{z_k\}_{k \ge0}$ bounded, we get
\[
\|y_{k+1}-x_{k+1}\|=\frac{1}{\theta_{k+1}}\|z_{k+1}-x_{k+1}\|\to 0.
\]
So, $\{x_k\}_{k \ge0}$ converges to the same limit. Therefore $x_k,y_k\to x_\infty \in \argmin f$. 
\end{proof}

\section{Eliciting AI-assistance}
The discovery of the result presented in this work was heavily assisted by ChatGPT, a proprietary large language model. Specifically, the work was conducted in October 2025 using the version GPT-5 Pro. We believe that Theorem~\ref{thm:nag}, which establishes point convergence of NAG, is valuable mathematical contribution, independent of the method of its discovery. At the same time, we believe our discovery serves as a case study demonstrating how AI can be leveraged to accelerate the discovery of new mathematics.  For this reason, we briefly describe the process through which ChatGPT's assistance was elicited.

The process began by prompting ChatGPT to solve the continuous-time problem. The model did not produce the correct answer in a single attempt; rather, the process was highly interactive. ChatGPT generated numerous arguments, approximately 80\% of which were incorrect, but several ideas felt novel and worth further exploring. Whenever a new idea emerged, whether correct or only partially so, we distilled the key insight and prompted ChatGPT to develop it further.

The authors' contribution was to filter out incorrect arguments, consolidate a consistent set of valid facts, identify promising lines of reasoning, and determine when a particular approach had been fully explored. ChatGPT's contribution was to generate candidate arguments, substantially accelerate the exploration of potential avenues, particularly by quickly ruling out unproductive directions, and ultimately produce the final proof argument.

Once the proof for the continuous-time setting was discovered, the translation to the discrete-time Nesterov accelerated gradient method was relatively straightforward. The prompting technique was to provide the proof for the continuous-time case in LaTeX code, presenting a theorem statement asserting point convergence of NAG, and instructing ChatGPT to produce a proof for the discrete-time setup using ideas from the continuous-time proof.

\subsection{Reproducing the AI-assisted proof discovery}
\label{ss:reproduce}
As discussed, original AI-assisted process of discovery was very nonlinear, interactive, and messy. However, after the main result was discovered and announced, we found that GPT-5 Pro was actually capable of producing a correct proof in one shot with a single well-formulated prompt.

Specifically, the following prompt---communicated to us by James R.\ Lee and reproduced here with his consent---was sufficient to elicit a complete and correct proof from GPT-5 Pro:
\begin{Promptbox}
\begin{verbatim}
Let $f : \R^n \to \R$ be a differentiable convex function with at
least one minimizer. Consider the ODE $$ \ddot X(t) + \frac{3}{t} 
\dot X(t) + \nabla f(X(t)) = 0 $$ with initial conditions X(0)=X_0 
and \dot X(0) = 0. I need you to show that X(t) \to X_{\infty} 
\in \mathrm{argmin} f. First let me know if the problem makes 
sense or if you need further clarifications. Don't use web search,
just analyze the dynamics yourself. You should prove that even 
though f may have many minima, X(t) still has a unique cluster 
point. I suggest considering pairs of possible cluster points a, b 
and analyzing what happens along the dynamics.
\end{verbatim}
\end{Promptbox}
The only part of the prompt that could be interpreted as a ``hint'' is the final sentence, and even this is quite mild. The authors (UJ and EKR) verified in October 2025 that GPT-5 Pro with this prompt produces a correct proof. However, the success rate was not 100\%, and a correct proof was obtained on the third trial. The corresponding chat log is included in the appendix.

Unfortunately, this specific process is no longer publicly reproducible. The original model used in these experiments, GPT-5 Pro, is no longer publicly available, and only later versions of ChatGPT, versions 5.1 and 5.2, are offered at the time of writing. When the current GPT-5.2 Pro model is asked to solve the problem, it searches for and locates an earlier version of this paper on arXiv, along with related online discussions. If the model is explicitly instructed not to use web search (as in the prompt above), it is able to solve the problem again. However, without privileged access to OpenAI's training data, it is not possible to ascertain whether the published solution has entered the model's training corpus.

\subsection{Difficulty of the solution}
\label{ss:post-analysis}
As this work has received significant attention as one of the earliest instances of an AI-assisted proof resolving an open problem in mathematics, there has been interest in calibrating what this accomplishment reveals about the current capabilities of AI systems, and in extrapolating its implications for the future influence of AI on mathematics.  Therefore, we comment on how ``difficult'' we consider the proof to be, in hindsight.

The proof of Theorem~\ref{ode-thm}, which establishes continuous-time point convergence, is indeed quite simple in hindsight. It is certainly a proof that could have been discovered by human researchers without the assistance of AI. As discussed in Section~\ref{ss:priorworks}, the prior work in the field was largely focused on an approach that succeeds for a perturbed version of the problem (corresponding to $r>3$), but does not seem to extend to the main case ($r=3$). The crucial step in the proof of Theorem~\ref{ode-thm} is to consider the difference $\mathcal{E}_{z_1} - \mathcal{E}_{z_2}$, which leads to the cancellation of many terms and yields a linear ODE. We argue that this observation is not obvious. Indeed, the author EKR and several other researchers in the field were unable to identify this insight in earlier attempts to solve this problem.

The proof of Theorem~\ref{thm:nag}, given the proof for Theorem~\ref{ode-thm}, is relatively straightforward. Although we derived this proof with the assistance of ChatGPT,  we are confident that we, the authors, could have arrived at the same result without AI-assistance with one or two days of focused effort.

Separately, we argue that the success of proving the point convergence result should, in large part, be attributed to the strength of continuous-time modeling as a simplified proxy. Without this crucial simplification, it is unclear whether the solution would have been found. Indeed, when GPT-5 or later versions are prompted to establish point convergence of NAG directly in discrete time, they do not succeed.

Finally, we emphasize that acknowledging this solution as relatively simple in hindsight should not be construed as a commentary on the limitations of AI. The authors personally believe that LLMs will continue to advance to the point of assisting humans in tackling genuinely challenging mathematical problems, including problems whose solutions are difficult even in hindsight.

\subsection{Dynamics for $r<3$}
A related problem formulation considered in prior work is
\[
\ddot{X}(t) + \frac{r}{t}\dot{X}(t) + \nabla f(X(t)) = 0, \qquad \text{for }t \ge 0,
\]
with parameter $r>0$. For $r>3$, point convergence is known \cite{attouch2018fast,may2017asymptotic}. For $r=3$, point convergence is proved in the present work. For $r \in (0,1)$, possible divergence is not too difficult to show.

An earlier version of this paper, posted on arXiv, also addressed the regime $r \in [1,3)$ and claimed partial results toward point convergence. We subsequently discovered an error that fully invalidates the argument. As a result, the question of point convergence for $r \in [1,3)$ remains completely open, with no known partial results.

The error arose from the authors' failure to carefully check the arguments generated by the LLM. Because the analysis for the $r \in [1,3)$ case was not the main result, the proof was subjected to a lower level of scrutiny, and the argument appeared to be a plausible extension of the proof for the main case $r=3$. We hope this case serves as a cautionary tale, showing that even experienced researchers may overlook gaps in AI-generated mathematical arguments.

\section{Conclusion}
This work resolves a longstanding open problem and shows that the acceleration provided by the Nesterov accelerated gradient (NAG) method does not come at the expense of point convergence. The proof technique is relatively simple, and we expect it may find broader applicability in related optimization setups.

That this problem would finally give way through the assistance of AI was likely an unforeseen development for much of the classical optimization community. We believe this case points to a broader and significant shift in mathematics research, in which AI plays an increasingly prominent role in the discovery of proofs. Despite the inevitable misuse and accompanying noise, the productive and constructive uses of this technology will prevail, and the authors therefore express firm optimism for the future of mathematics with AI.

\section*{Acknowledgment}
UJ and EKR were supported by the Air Force Office of Scientific Research under award number FA95502510183 and the Samsung Science and Technology Foundation (Project Number SSTF-BA2101-02).

\appendix

\section{Chat log with ChatGPT} 
The following chat log demonstrates that the prompt presented in Section~\ref{ss:reproduce} elicits the proof for Theorem~\ref{ode-thm} from GPT-5 Pro. The text is reproduced verbatim, with the sole modification being the insertion of line breaks for typographical clarity. The same chat log is also available at:
\par\medskip
\begin{center}
\texttt{\url{https://chatgpt.com/share/6950b63e-1a58-8009-832b-48288fd60c30}}
\end{center}
\medskip
We note that this is a non-archival link hosted on OpenAI's ChatGPT platform, and the authors cannot guarantee its long-term availability.

\begin{AuthorsBox}
Let $f : \mathbb{R}^n \to \mathbb{R}$ be a differentiable convex function with at
least one minimizer. Consider the ODE 
$$ \ddot X(t) + \frac{3}{t} \dot X(t) + \nabla f(X(t)) = 0 $$ 
with initial conditions $X(0)=X_0$ and $\dot X(0) = 0$. I need you to 
show that $X(t) \to X_{\infty} \in \argmin f$. First let me know if the problem makes sense or if you need further clarifications. Don't use web search, just analyze the dynamics yourself. You should prove that even though f may have many minima, $X(t)$ still has a unique cluster point. I suggest considering pairs of possible cluster points a, b and analyzing what happens along the 
dynamics.
\end{AuthorsBox}

\begin{ChatGPTBox}
Yes---the problem is well-posed and the statement is true (in finite dimension). To keep the ODE well-defined in the classical sense, it is standard to assume $\nabla f$ is locally Lipschitz; the convergence argument below uses only convexity and the existence of a $C^2$ solution on $(0,\infty)$. I'll now prove that the trajectory has a \textbf{unique} cluster point and hence converges to a minimizer.

\medskip
\noindent\rule{\textwidth}{0.4pt}

\noindent\textbf{Notation and a basic identity.}

Let $S:=\operatorname{argmin} f\neq\emptyset$ and $f^*:=\min f$. 

Fix $z\in S$. Write $Y(t)=X(t)-z$ and $G(t)=\nabla f(X(t))$.

\vspace{0.1in}
The ODE
\[
\ddot X+\frac{3}{t}\dot X+\nabla f(X)=0
\]
implies the ``momentum--compensated'' quantity
\[
M(t):=X(t)+\frac{t}{2}\dot X(t)
\]
satisfies
\begin{equation}
\dot M(t)= -\frac{t}{2}\,\nabla f(X(t))= -\frac{t}{2}G(t). \tag{1}
\end{equation}

\medskip
\noindent\rule{\textwidth}{0.4pt}
\noindent\textbf{An augmented Lyapunov functional.}

Define, for any $z\in S$,
\[
\boxed{
\begin{aligned}
\mathcal F_z(t) :=\;&
t^2\bigl(f(X(t))-f^*\bigr)
+ \bigl\|X(t)-z+\tfrac{t}{2}\dot X(t)\bigr\|^2 \\
&+ \frac{t^2}{4}\,\|\dot X(t)\|^2
- \int_{t_0}^{t} s\,\bigl(f(X(s))-f^*\bigr)\,ds .
\end{aligned}
}
\]

\medskip
\noindent\textbf{Claim.} $\mathcal F_z'(t)=-\,t\,\|\dot X(t)\|^2\le 0.$

\smallskip
\noindent\emph{Proof.}
Set $U(t)=Y(t)+\frac{t}{2}\dot X(t)$. From the ODE one checks
\[
\dot U(t) = -\frac{t}{2}\,G(t),
\qquad
\frac{d}{dt}\|U\|^2 = -t\,\langle U,G\rangle.
\]
Also,
\[
\frac{d}{dt}\bigl(t^2(f-f^*)\bigr)=2t(f-f^*)+t^2\langle G,\dot X\rangle,
\qquad
\frac{d}{dt}\Bigl(\frac{t^2}{4}\|\dot X\|^2\Bigr)=\frac{t}{2}\|\dot X\|^2+\frac{t^2}{4}\frac{d}{dt}\|\dot X\|^2.
\]
Using
\[
\langle G,\dot X\rangle
= -\frac{d}{dt}\frac12\|\dot X\|^2-\frac{3}{t}\|\dot X\|^2
\quad\text{(by the ODE)}
\]
and the convexity inequality $\langle G,Y\rangle\ge f(X)-f^*$,
a straightforward computation gives
\[
\frac{d}{dt}\!\left[
t^2(f-f^*)+\|U\|^2+\frac{t^2}{4}\|\dot X\|^2\right]
\;\le\; t(f-f^*) - t\,\|\dot X\|^2.
\]
Subtracting
\[
\frac{d}{dt}\int_{t_0}^{t} s(f(X(s))-f^*)\,ds = t(f-f^*)
\]
yields $\mathcal F_z'(t) \le -t\|\dot X(t)\|^2$, as claimed.
\hfill$\square$

\medskip
\noindent\textbf{Consequences.}
Since $\mathcal F_z$ is nonincreasing and bounded below,
\[
\int_{t_0}^{\infty} t\,\|\dot X(t)\|^2\,dt < \infty
\quad\text{and}\quad
t^2\bigl(f(X(t))-f^*\bigr)\le \mathcal F_z(t)\le \mathcal F_z(t_0).
\]
Hence
\begin{equation*}
f(X(t)) - f^* = O\!\left(\frac{1}{t^2}\right)\quad\text{and}\quad
\sup_{t\ge t_0} t\|\dot X(t)\| < \infty.
\tag{2}
\end{equation*}
Moreover, expanding
\[
\|Y+\tfrac{t}{2}\dot X\|^2+\frac{t^2}{4}\|\dot X\|^2
= \|Y\|^2 + t\langle Y,\dot X\rangle + \frac{t^2}{2}\|\dot X\|^2
\]
and using $2ab\ge -a^2-b^2$ gives
\[
\|Y(t)\|^2 \;\le\; 2\Bigl(\,\|Y+\tfrac{t}{2}\dot X\|^2+\frac{t^2}{4}\|\dot X\|^2\Bigr)
\;\le\; 2\,\mathcal F_z(t_0),
\]
so the trajectory $\{X(t)\}$ is bounded. In particular, it possesses cluster points.

Finally, from $f(X(t))\downarrow f^*$ and continuity of $f$, any cluster point lies in $S$.

\medskip
\noindent\rule{\textwidth}{0.4pt}
\noindent\textbf{A key scalar that always converges.}
For two minimizers $a,b\in S$, consider the difference of Lyapunov functionals:
\[
\mathcal F_a(t)-\mathcal F_b(t)
= \|X(t)-a+\tfrac{t}{2}\dot X(t)\|^2 - \|X(t)-b+\tfrac{t}{2}\dot X(t)\|^2.
\]
Because each $\mathcal F_z(t)$ converges as $t\to\infty$, their difference converges too. Expanding the difference yields
\[
\mathcal F_a(t)-\mathcal F_b(t)
= 2\big\langle X(t)+\tfrac{t}{2}\dot X(t),\, b-a\big\rangle
+ \|a\|^2-\|b\|^2.
\]
Hence the scalar function
\[
D_{a,b}(t):=\big\langle X(t)+\tfrac{t}{2}\dot X(t),\, b-a\big\rangle
\]
admits a finite limit as $t\to\infty$. Call this limit $L_{a,b}$.

\medskip
\noindent\rule{\textwidth}{0.4pt}
\noindent\textbf{A small-velocity subsequence near any cluster point.}

From $\int_{t_0}^{\infty} t\,\|\dot X(t)\|^2\,dt<\infty$ it follows that for any $\varepsilon>0$, the set
\[
A_\varepsilon:=\{t\ge t_0:\; t\|\dot X(t)\|\le \varepsilon\}
\]
has complement of finite Lebesgue measure (since on $A_\varepsilon^c$ we have $t\|\dot X\|^2\ge \varepsilon^2$).
If $x_\infty$ is any cluster point of $X$, there exists a sequence $t_k\to\infty$ with $X(t_k)\to x_\infty$.
Because $A_\varepsilon^c$ has finite measure, we can choose $t_k$ additionally so that $t_k\in A_\varepsilon$ for all large $k$.
Since $\varepsilon>0$ is arbitrary, we can refine the sequence to achieve
\begin{equation*}
X(t_k)\to x_\infty\quad\text{and}\quad t_k\dot X(t_k)\to 0.
\tag{3}
\end{equation*}

\medskip
\noindent\rule{\textwidth}{0.4pt}
\noindent\textbf{Uniqueness of the cluster point.}

Let $a,b\in S$ be two (putative) cluster points. By the previous step, we can find sequences $\{t_k\}$, $\{s_k\}$ with
\[
X(t_k)\to a,\;\; t_k\dot X(t_k)\to 0
\qquad\text{and}\qquad
X(s_k)\to b,\;\; s_k\dot X(s_k)\to 0.
\]
Since $D_{a,b}(t)\to L_{a,b}$, passing to the limit along $\{t_k\}$ and $\{s_k\}$ gives
\[
L_{a,b}
= \lim_{k\to\infty}\Big\langle X(t_k)+\tfrac{t_k}{2}\dot X(t_k),\, b-a\Big\rangle
= \langle a, b-a\rangle,
\]
and
\[
L_{a,b}
= \lim_{k\to\infty}\Big\langle X(s_k)+\tfrac{s_k}{2}\dot X(s_k),\, b-a\Big\rangle
= \langle b, b-a\rangle.
\]
Therefore $\langle a, b-a\rangle = \langle b, b-a\rangle$, i.e.
\[
\|b-a\|^2=0\quad\Longrightarrow\quad a=b.
\]
So there is \textbf{at most one} cluster point.

\vspace{0.1in}
Because the trajectory is bounded, it has at least one cluster point; hence the set of cluster points is a singleton, and $X(t)$ \textbf{converges}:
\[
X(t)\;\to\; X_\infty\in S \qquad (t\to\infty).
\]

\medskip
\noindent\rule{\textwidth}{0.4pt}
\noindent\textbf{What we used / extra remarks.}
\begin{itemize}
\item The only convexity input was
\[
\langle \nabla f(X),\, X-z\rangle \ge f(X)-f^\ast
\qquad\text{for } z\in S.
\]

\item The integral Lyapunov functional $\mathcal F_z$ simultaneously gives the
$O(1/t^2)$ rate
\[
f(X(t)) - f^\ast \le \frac{\mathcal F_z(t_0)}{t^2},
\]
and the key integrability
\[
\int_{t_0}^{\infty} t\,\|\dot X(t)\|^2\,dt < \infty.
\]

\item No uniqueness of the minimizer was assumed; the argument with the difference
$\mathcal F_a - \mathcal F_b
$ forces all cluster points to coincide.

\item The singularity at $t=0$ is harmless for the asymptotic analysis; all estimates
hold on $[t_0,\infty)$ for any $t_0>0$. (With $\dot X(0)=0$ one has $t\,\dot X(t)\to 0$ as $t\downarrow 0$.
\end{itemize}

\noindent
This completes the proof that the solution of
\[
\ddot X(t) + \frac{3}{t}\dot X(t) + \nabla f(X(t)) = 0
\]
converges to a point $X_\infty\in \operatorname{argmin} f$, even when
$\operatorname{argmin} f$ is not a singleton.

\end{ChatGPTBox}

\bibliographystyle{siamplain}
\bibliography{references}

@article{Cauchy1847_methode,
  title   = {M\'ethode G\'en\'erale Pour La R\'esolution Des Syst\'emes d'\'equations Simultan\'ees},
  author  = {A.-L. Cauchy},
  year    = {1847},
  journal = {Comptes Rendus Hebdomadaires des S\'eances de l'Acad\'emie des Sciences},
  volume  = {25},
  pages   = {536--538},
  ajournal = {C. R. Hebd. S\'eances d'Acad. Sci.}
}

@article{attouch2018fast,
  title={Fast convergence of inertial dynamics and algorithms with asymptotic vanishing viscosity},
  author={Attouch, Hedy and Chbani, Zaki and Peypouquet, Juan and Redont, Patrick},
  journal={Mathematical Programming},
  volume={168},
  number={1},
  pages={123--175},
  year={2018},
  publisher={Springer}
}

@article{nesterov2005smooth,
  title   = {Smooth minimization of non-smooth functions},
  author  = {Y. Nesterov},
  journal = {Mathematical Programming},
  volume  = {103},
  number  = {1},
  pages   = {127--152},
  year    = {2005},
  publisher = {Springer}
}

@article{nesterov1983method,
  title = {A Method of Solving a Convex Programming Problem with Convergence Rate {$O(1/k^2)$}},
  author = {Nesterov, Yurii},
  year = {1983},
  journal = {Doklady Akademii Nauk SSSR},
  volume = {269},
  number = {3},
  pages = {543--547}

}

@article{su2016differential,
  title   = {A differential equation for modeling {N}esterov's accelerated gradient method: Theory and insights},
  author  = {W. Su and S. Boyd and E. J. Cand{\`e}s},
  journal = {Journal of Machine Learning Research},
  volume  = {17},
  number  = {153},
  pages   = {1--43},
  year    = {2016}
}

@article{chambolle2015convergence,
  title={On the Convergence of the Iterates of the {``Fast Iterative Shrinkage/Thresholding Algorithm''}},
  author={Chambolle, Antonin and Dossal, Charles H},
  journal={Journal of Optimization Theory and Applications},
  volume={166},
  number={3},
  pages={25},
  year={2015}
}

@article{polyak1964some,
  title={Some methods of speeding up the convergence of iteration methods},
  author={Polyak, Boris T},
  journal={USSR Computational Mathematics and Mathematical Physics},
  volume={4},
  number={5},
  pages={1--17},
  year={1964},
  publisher={Elsevier}
}

@book{polyak1987introduction,
  title={Introduction to Optimization},
  author={Polyak, Boris T},
  year={1987},
  publisher={New York, Optimization Software}
}

@article{bauschke2020applying,
  title={Applying {FISTA} to optimization problems (with or) without minimizers},
  author={Bauschke, Heinz H and Bui, Minh N and Wang, Xianfu},
  journal={Mathematical Programming},
  volume={184},
  number={1},
  pages={349--381},
  year={2020},
  publisher={Springer}
}

@article{su2014differential,
  title   = {A differential equation for modeling {N}esterov's accelerated gradient method: Theory and insights},
  author  = {W. Su and S. Boyd and E. J. Cand{\`e}s},
  journal = {Neural Information Processing Systems},
  year    = {2014}
}

@article{attouch2015fast,
  title={Fast convergence of an inertial gradient-like system with vanishing viscosity},
  author={Attouch, Hedy and Peypouquet, Juan and Redont, Patrick},
  journal={arXiv:1507.04782},
  year={2015}
}

@article{attouch2018convergence,
  title={Convergence rates of inertial forward-backward algorithms},
  author={Attouch, Hedy and Cabot, Alexandre},
  journal={SIAM Journal on Optimization},
  volume={28},
  number={1},
  pages={849--874},
  year={2018},
  publisher={SIAM}
}

@article{taylor2017smooth,
  title={Smooth strongly convex interpolation and exact worst-case performance of first-order methods},
  author={Taylor, Adrien B and Hendrickx, Julien M and Glineur, Fran{\c{c}}ois},
  journal={Mathematical Programming},
  volume={161},
  number={1},
  pages={307--345},
  year={2017},
  publisher={Springer}
}

@article{d2021acceleration,
  title={Acceleration methods},
  author={d’Aspremont, Alexandre and Scieur, Damien and Taylor, Adrien},
  journal={Foundations and Trends{\textregistered} in Optimization},
  volume={5},
  number={1-2},
  pages={1--245},
  year={2021},
  publisher={Now Publishers, Inc.}
}

@article{taylor2019stochastic,
  title={Stochastic first-order methods: non-asymptotic and computer-aided analyses via potential functions},
  author={Taylor, Adrien and Bach, Francis},
  journal={Conference on Learning Theory},
  year={2019},
}

@article{drori2017exact,
  title={The exact information-based complexity of smooth convex minimization},
  author={Drori, Yoel},
  journal={Journal of Complexity},
  volume={39},
  pages={1--16},
  year={2017},
  publisher={Elsevier}
}

@article{drori2022oracle,
  title={On the oracle complexity of smooth strongly convex minimization},
  author={Drori, Yoel and Taylor, Adrien},
  journal={Journal of Complexity},
  volume={68},
  pages={101590},
  year={2022},
  publisher={Elsevier}
}

@article{drori2014performance,
  title={Performance of first-order methods for smooth convex minimization: a novel approach},
  author={Drori, Yoel and Teboulle, Marc},
  journal={Mathematical Programming},
  volume={145},
  number={1},
  pages={451--482},
  year={2014},
  publisher={Springer}
}

@incollection{combettes2011proximal,
  title   = {Proximal splitting methods in signal processing},
  author  = {P. L. Combettes and J.-C. Pesquet},
  booktitle = {Fixed-Point Algorithms for Inverse Problems in Science and Engineering},
  pages   = {185--212},
  year    = {2011},
  publisher = {Springer},
}

@article{attouch2016rate,
  title={The rate of convergence of {N}esterov's accelerated forward-backward method is actually faster than $1/k^2$},
  author={Attouch, Hedy and Peypouquet, Juan},
  journal={SIAM Journal on Optimization},
  volume={26},
  number={3},
  pages={1824--1834},
  year={2016},
  publisher={SIAM}
}

@article{burachik1995full,
  title={Full convergence of the steepest descent method with inexact line searches},
  author={Burachik, Regina and Gra{\~n}a Drummond, LM and Iusem, Alfredo N and Svaiter, Bernar Fux},
  journal={Optimization},
  volume={32},
  number={2},
  pages={137--146},
  year={1995},
  publisher={Taylor \& Francis}
}

@article{attouch2016fast,
  title={Fast convex optimization via inertial dynamics with {H}essian driven damping},
  author={Attouch, Hedy and Peypouquet, Juan and Redont, Patrick},
  journal={Journal of Differential Equations},
  volume={261},
  number={10},
  pages={5734--5783},
  year={2016},
  publisher={Elsevier}
}

@article{attouch2025recovering,
  title={Recovering {N}esterov accelerated dynamics from Heavy Ball dynamics via time rescaling},
  author={Attouch, Hedy and R. I. Bo{\c t} and Hulett, David Alexander and Nguyen, Dang-Khoa},
  journal={arXiv:2504.15852},
  year={2025}
}

@article{may2017asymptotic,
  title={Asymptotic for a second-order evolution equation with convex potential and vanishing damping term},
  author={May, Ramzi},
  journal={Turkish Journal of Mathematics},
  volume={41},
  number={3},
  pages={681--685},
  year={2017}
}

@article{attouch2019rate,
  title   = {Rate of convergence of the {N}esterov accelerated gradient method in the subcritical case {$\alpha\le 3$}},
  author  = {H. Attouch and Z. Chbani and H. Riahi},
  journal = {ESAIM: Control, Optimisation and Calculus of Variations},
  volume  = {25},
  pages   = {2},
  year    = {2019},
  publisher = {EDP Sciences}
}

@article{bot2025iterates,
  title   = {The iterates of {N}esterov’s accelerated algorithm converge in the critical regimes},
  author  = {R. I. Bo{\c t} and J. Fadili and D.-K. Nguyen},
  journal = {arXiv:2510.22715},
  year    = {2025}
}

@article{boct2025accelerating,
  title   = {Accelerating diagonal methods for bilevel optimization: {U}nified convergence via continuous-time dynamics},
  author  = {R. I. Bo{\c t} and E. Chenchene and E. R. Csetnek and D. A. Hulett},
  journal = {arXiv:2505.14389},
  year    = {2025}
}

@article{kim2016optimized,
  title   = {Optimized first-order methods for smooth convex minimization},
  author  = {D. Kim and J. A. Fessler},
  journal = {Mathematical Programming},
  volume  = {159},
  number  = {1},
  pages   = {81--107},
  year    = {2016},
  publisher = {Springer}
}

@article{park2023factor,
  title   = {Factor-$\sqrt{2}$ acceleration of accelerated gradient methods},
  author  = {C. Park and J. Park and E. K. Ryu},
  journal = {Applied Mathematics \& Optimization},
  volume  = {88},
  number  = {3},
  pages   = {77},
  year    = {2023},
  publisher = {Springer}
}

@article{beck2009fast,
  title   = {A fast iterative shrinkage-thresholding algorithm for linear inverse problems},
  author  = {A. Beck and M. Teboulle},
  journal = {SIAM Journal on Imaging Sciences},
  volume  = {2},
  number  = {1},
  pages   = {183--202},
  year    = {2009},
  publisher = {SIAM}
}

@book{nesterov2018lectures,
  title   = {Lectures on Convex Optimization},
  author  = {Y. Nesterov},
  volume  = {137},
  edition   = {2},
  year    = {2018},
  publisher = {Springer}
}

@article{opial1967weak,
  title={Weak convergence of the sequence of successive approximations for nonexpansive mappings},
  author={Opial, Zdzis{\l}aw},
 journal={Bulletin of the American Mathematical Society},
 volume  = {73},
  pages   = {591--597},
  year={1967}
}

@article{baillon1977quelques,
  title={Quelques propri{\'e}t{\'e}s des op{\'e}rateurs angle-born{\'e}s et n-cycliquement monotones},
  author={Baillon, Jean-Bernard and Haddad, Georges},
  journal={Israel Journal of Mathematics},
  volume={26},
  number={2},
  pages={137--150},
  year={1977},
  publisher={Springer}
}

@article{attouch2000heavy,
  title={The heavy ball with friction method, {I}. {T}he continuous dynamical system: {G}lobal exploration of the local minima of a real-valued function by asymptotic analysis of a dissipative dynamical system},
  author={Attouch, Hedy and Goudou, Xavier and Redont, Patrick},
  journal={Communications in Contemporary Mathematics},
  volume={2},
  number={01},
  pages={1--34},
  year={2000},
  publisher={World Scientific}
}
\end{document}